
\documentclass{birkjour}
%
%
%
 \newtheorem{thm}{Theorem}[section]
 \newtheorem{cor}[thm]{Corollary}
 
 \newtheorem{problem}[thm]{Problem}
 \newtheorem{prop}[thm]{Proposition}
 \theoremstyle{definition}
 \newtheorem{defn}[thm]{Definition}
 \theoremstyle{remark}
 \newtheorem{rem}[thm]{Remark}
 \newtheorem*{ex}{Example}
 \numberwithin{equation}{section}

\begin{document}

%
%
%
%
%
%
%
%
%

\title[Boundary representations and  rectangular hyperrigidity]
 {Boundary representations and  rectangular hyperrigidity}

\author[Arunkumar]{Arunkumar C.S.}

\address{%
Kerala School of Mathematics,\\
 Kozhikode - 673 571,\\
  India. }

\email{arunkumarcsmaths9@gmail.com}

\author[Shankar]{Shankar P.}
\address{ %
Indian Statistical Institute,\\ Statistics and Mathematics Unit, 8th Mile, Mysore Road,\\ Bangalore, 560059, India.}

\email{shankarsupy@gmail.com}

\author[Vijayarajan]{Vijayarajan A.K.}

\address{%
Kerala School of Mathematics,\\
 Kozhikode - 673 571,\\
  India. }

\email{vijay@ksom.res.in}

\subjclass{Primary 46L07, 47L25; Secondary 46L52, 47A20}

\keywords{Operator space, operator system, boundary representation, hyperrigidity}

\date{January 1, 2004}

\begin{abstract}
We explore  connections between  boundary representations of operator spaces and  those of the associated Paulsen systems. Using the notions of finite representation and separating property which we introduce for operator spaces, the boundary representations for operator spaces are characterized.   We also  introduce weak boundary for operator spaces. Rectangular hyperrigidity for operator spaces introduced here is used to  establish an analogue of Saskin's theorem in the setting of operator spaces in finite dimensions.
\end{abstract}

\maketitle

\section{Introduction}
Noncommutative approximation and extremal theories initiated by Arveson \cite{Arv11} in the context of operator systems in $C^*$-algebras have seen tremendous growth in the recent past. One of the main results of the classical approximation theory is the famous Korovkin theorem \cite{PPK60}  which concerns convergence of positive linear maps on function algebras. The classical Korovkin theorem is as follows:  for each $n\in \mathbb{N}$, let $\Phi_n:C[0,1]\rightarrow C[0,1]$ be a positive linear map. If $\lim\limits_{n\rightarrow\infty} ||\Phi_{n}(f)-f ||=0 $ for every $f\in \{1,x,x^2\}$, then  $\lim\limits_{n\rightarrow\infty} ||\Phi_{n}(f)-f ||=0 $ for every $f\in C[0,1]$. The set $G=\{1,x,x^2\}$ is called a Korovkin set in $C[0,1]$.

The classical extremal theory concerning Choquet boundary of subalgebras of function algebras is a closely related topic of interest. Given a uniform algebra $\mathcal A \subset C(X)$ and a point $\xi\in X$, the point $\xi$ belong to the \textit{choquet boundary} \cite{BL75} of $\mathcal A$ if the corresponding evaluation functional admits a unique unital completely positive extension to $C(X)$. Arveson \cite{Arv69} introduced the notion of \textit{boundary representation} for an operator system and proposed it as the non-commutative analogue of Choquet boundary of a uniform algebra. Subsequently several other authors carried forward the program initiated by Arveson and the articles  \cite{Arv08,DK15,DM05,MH79} are worth mentioning in this context.

There is a close relation between Korovkin sets and Choquet boundaries in the classical setting as suggested by Saskin \cite{YAS66}. Saskin's theorem states that $G$ is a Korovkin set in $C[0,1]$ if and only if the Choquet boundary of $G$ is $[0,1]$. Arveson \cite{Arv11}  introduced a non-commutative analogue of Korovkin set, which he called a \textit{hyperrigid set}. Arveson studied hyperrigidity in the setting of operator systems in $C^*$-algebras. Along the lines of Saskin's theorem in the classical setting, Arveson \cite{Arv11} formulated  \textit{hyperrigidity conjecture} as follows. For an operator system  $S$ and  the generated $C^*$-algebra $A=C^*(S)$, if every irreducible representation of $A$ is a boundary representation for $S$, then S is hyperrigid.  The  hyperrigidity conjecture inspired several studies in recent years \cite{RCL18,CH18,MK15,NPSV18, SV17}. Arveson showed in \cite{Arv11} that the conjecture is valid whenever $C^*$-algebra has countable spectrum. Davidson and Kennedy \cite{DK16} verified the conjecture in the case when $C^*$-algebra is commutative. Some partial results in other contexts have also appeared in \cite{RC18,CK14}.

The  conjecture by Arveson which came to be known as 'Arveson's conjecture' \cite{Arv69} also concerns boundary representations for operator systems in $C^*$-algebras  which was completely settled by Davidson and Kennedy \cite{DK16}. The conjecture states that every operator system and every unital operator algebra has sufficiently many boundary representations to completely norm it. A natural generalisation of the above mentioned theory in the non-selfadjoint setting can be done in the context of operator spaces in \textit{ternary rings} of operators. The recent work by Fuller, Hartz and Lupini \cite{FHL18} introduced the notion of boundary representations for operator spaces in ternary rings of operators. They established the natural operator space analogue of Arveson's conjecture \cite{Arv69} on boundary representations. Paulsen's 'off-diagonal' technique and associated generalization of Stinespring's dilation theorem \cite{Pau02} for completely contractive maps played a central role in establishing the conjecture, while Arveson's approach yields the existence of unital completely positive non-commutative Choquet boundary, the corresponding adaptation by Fuller, Hartz and Lupini yields the existence of a completely contractive non-commutative Choquet boundary. In the latter case, the analogy with classical theory of function algebra theory is not very satisfactory, since the resulting non-commutative Shilov boundary is not an algebra.  To overcome these difficulties, Clouatre and Ramsey \cite{CR19} developed the completely bounded counterpart to Choquet boundaries. They used this completely bounded non-commutative Choquet boundary to construct a non-commutative Shilov boundary that is still a $C^*$-algebra.

In this paper we show that a boundary representation for an operator space induces a boundary representation for the corresponding Paulsen system and we illustrate this with a couple of examples. We extend the notion of weak boundary representation introduced in \cite{NPSV18} for operator systems to operator spaces and study the relation between  weak boundary representations of an operator space and the corresponding Paulsen system. The notion of finite representation introduced by Arveson in \cite{Arv69} is generalized in the context of operator spaces. In one of the main results of this article, we characterize boundary representations of operator spaces in terms of rectangular operator extreme points, finite representations and separating property of operator spaces. The notion of rectangular hyperrigidity for operator spaces is introduced and a version of Saskin's theorem is established.

This paper is divided into five sections, besides the introduction. In section 2, we gather  necessary background material and results that are required throughout. Section 3 deals with boundary representation for an operator space and the corresponding operator system.  Section 4 introduces weak boundary representation for operator spaces and prove that a weak boundary representation for an operator space  induces a weak boundary representation for the corresponding Paulsen system  and vice versa. In section 5,  \textit{finite representation} for operator spaces and  \textit{separating operator spaces} are introduced. We prove that $\phi$ is a boundary representation for an operator space $X$ if and only if $\phi$ is a rectangular operator extreme point for $X$, $\phi$ is a finite representation for $X$ and $X$ separates $\phi$. In section 6, we introduce the notion of rectangular hyperrigidity for operator spaces in ternary ring of operators(TRO). We prove that if an operator space is rectangular hyperrigid in the TRO  generated by the operator space, then every irreducible representation of the TRO is a boundary representation for the operator space.  A partial answer for the converse of the above result is also provided which is a version of classical Saskin's theorem in this setting.  A relation between  rectangular hyperrigidity of an operator space  and hyperrigidity of the corresponding Paulsen system is also established.

\section{Preliminaries}

A \textit{ternary ring of operators} (TRO) between Hilbert spaces $H$ and $K$ is a norm closed subspace $T$ of $B(H,K)$  such that $xy^*z\in T$ for all $x,y,z\in T$. A TRO $T$ always carries an operator space structure as a closed subspace of $B(H,K)$. A \textit{ triple morphism } between TRO's $T_1$ and $T_2$ is a linear map $\phi: T_1\rightarrow T_2 $ such that $\phi(xy^*z)=\phi(x)\phi(y)^*\phi(z)$ for all $x,y,z \in T_1$.

Let $T$ be a TRO. Then $TT^*:=\overline{lin}\{xy^*:x,y\in T\}$ is called the \textit{left $C^*$-algebra} of $T$ and similarly, $T^*T:=\overline{lin}\{x^*y:x,y\in T\}$ the called the \textit{right $C^*$-algebra} of $T$. From \cite[8.1.17]{BM04} \textit{the linking algebra} $\mathcal{L}(T)$ is defined to be the set of $2\times 2$ matrices:
$$\mathcal{L}(T):=\begin{bmatrix}
TT^* & T\\
T^* & T^*T
\end{bmatrix}. $$
Thus any TRO $T$ can be seen as the 1-2 corner of its linking algebra $\mathcal L(T)$. Note that the linking algebra $\mathcal L(T)$ is a $C^*$-algbara.

A triple morphism $\theta:T\rightarrow B(H,K)$ induces a $*$-homomorphism $\omega :\mathcal{L}(T)\rightarrow B(K\oplus H)$ on the linking algebra \cite[Proposition 2.1]{MH99} such that $$\omega=\begin{bmatrix}
\omega_{1} & \theta\\
\theta^* & \omega_{2}
\end{bmatrix}$$ where, $\omega_{1}:TT^*\rightarrow B(K)$ and $\omega_{2} :T^*T\rightarrow B(H)$ are $*$-representations satisfying $\omega_1(xy^*)=\theta(x)\theta(y)^*$ and $\omega_2(x^*y)=\theta(x)^*\theta(y)$ for all $x,y\in T$. Conversely by \cite[Proposition 3.1.2]{DB11}, if $\omega:\mathcal{L}(T)\rightarrow B(L)$ is a $*$-homomorphism of the $C^*$-algebra $\mathcal{L}(T)$, then there exist Hilbert spaces $H,K$ such that $L=K\oplus H$ and  a triple morphism $\theta:T\rightarrow B(H,K)$ with
$$\omega=\begin{bmatrix}
\omega_{1} & \theta\\
\theta^* & \omega_{2}
\end{bmatrix}$$ where, $\omega_{1}:TT^*\rightarrow B(K)$ and $\omega_{2} :T^*T\rightarrow B(H)$ are $*$-representations satisfying $\omega_1(xy^*)=\theta(x)\theta(y)^*$ and $\omega_2(x^*y)=\theta(x)^*\theta(y)$ for all $x,y\in T$.
Therefore, there is a 1-1 correspondence between the representations of a TRO and the representations of its linking algebra.

The notions of nondegenerate, irreducible and faithful representations of TRO's are natural generalizations of these notions from $C^*$-algebras.
A \textit{ representation } of a TRO T is a triple morphism $ \phi: T\rightarrow B(H,K)$ for some Hilbert spaces $H$ and $K$. A representation $\phi : T\rightarrow B(H,K)$ is \textit{ nondegenerate }if,  whenever $p$, $q$ are projections in $B(H)$ and $B(K)$, respectively, such that $q\phi(x)=\phi(x)p=0$ for every $x\in T$, one has $p=0$ and $q=0$ (equivalently, if $\overline{\phi(T)H}=K$ and $\overline{\phi(T)^* K}=H$). Let $H_1\subseteq H$ and $K_1\subseteq K$ be closed subspaces. The pair $(H_1,K_1)$ is said to be $\phi$-\textit{invariant} if $\phi(T)H_1\subseteq K_1$ and $\phi(T)^*K_1\subseteq H_1$.
A representation $\phi : T\rightarrow B(H,K)$ is \textit{ irreducible } if  whenever $p$, $q$ are projections in $B(H)$ and $B(K)$ respectively such that $q\phi(x)=\phi(x)p$ for every $x\in T$, one has $p=0$ and $q=0$, or $p=1$ and $q=1$ (equivalently, if $(0,0)$ and $(H,K)$ are the only $\phi$-invariant pairs). We call $\phi$  \textit{faithful} if it is injective or equivalently completely isometric. A TRO $T\subset B(H,K)$ is said to act nondegenerately or   irreducibly if the corresponding inclusion representation is nondegenerate or irreducible, respectively.  A representation of a TRO is nondegenerate (irreducible) if and only if the representation of its linking algebra is nondegenerate (irreducible) \cite[Lemma 3.1.4, Lemma 3.1.5]{DB11}. Let $\phi_i:T\rightarrow B(H_i,K_i),~ i=1,2$, be  representations. The  representations  $\phi_1$ and $\phi_2$ are said to be \textit{unitarily equivalent} if there exists unitary operators $u_1:H_1\rightarrow H_2$ and $u_2:K_1\rightarrow K_2$ such that $\phi_1(t)=u_2^*\phi_2(t)u_1$ for all $t\in T$. We refer the reader to \cite{BM04,DB11} for a nice account on TRO's and the representation theory of TRO's.

Given an operator space $X\subset B(H,K)$, we can assign an operator system $\mathcal S(X)\subset B(K\oplus H)$. This operator system is called the \textit{Paulsen system} \cite[Lemma 8.1]{Pau02} of $X$. The Paulsen system of $X$  is defined to be the space of operators $$\left\{ \begin{bmatrix} \lambda I_{K} & x\\ y^* & \mu I_{H} \end{bmatrix}: x,y \in X, \lambda , \mu \in\mathbb{C}\right \}$$ where $I_{H}$ and $I_{K}$ denote the identity operators on $H$ and $K$ respectively.  Any completely contractive map $\phi : X\rightarrow B(\tilde{H},\tilde{K})$ on the operator space $X$ extends canonically  to a unital completely positive  map $\mathcal S(\phi):\mathcal S(X)\rightarrow B(\tilde{K}\oplus \tilde{H})$ defined by $$\mathcal S(\phi)(\begin{bmatrix} \lambda I_{K} & x\\ y^* & \mu I_{H} \end{bmatrix})=\begin{bmatrix} \lambda I_{\tilde{K}} & \phi(x)\\ \phi(y)^* & \mu I_{\tilde{H}} \end{bmatrix}.$$

Let $T$ be the TRO containing $X$ as a generating subspace. Suppose $\mathcal{A}=C^*(\mathcal{S}(X))$ is the $C^*$-algebra generated by $\mathcal{S}(X)$, then
$$\mathcal{A}=\left\lbrace \left[ {\begin{array}{cc}
TT^*+\lambda I_K  & T \\
T^* & T^*T+\mu I_H
\end{array} } \right]    : \lambda, \mu \in \mathbb{C}  \right\rbrace.$$
Observe that, $\mathcal{A}$ is a unitalization of the linking algebra $\mathcal{L}(T)$ of $T$. Thus, there is a 1-1 correspondence between the representations of TRO $T$ and the $*$-representations of the $C^*$-algebra $\mathcal{A}$.

The following definitions and results are due to Fuller, Hartz and Lupini \cite{FHL18}. Let $X$ be an operator space. A \textit{rectangular operator state} is a nondegenerate completely contractive linear map $\phi:X \rightarrow B(H,K)$ such that $||\phi||_{cb}=1$. Rectangular operator state $\psi:X\rightarrow B(\tilde{H},\tilde{K})$ is said to be a \textit{dilation} of $\phi$ if there exist linear isometries $v:H\rightarrow \tilde{K}$ and $w:K\rightarrow \tilde{K}$ such that $w^*\psi(x)v=\phi(x)$ for every $x\in X$.
Let $\psi:X \rightarrow B(\tilde{H},\tilde{K})$ be a dilation of a rectangular operator state $\phi: X \rightarrow B(H,K)$. We can assume that $H\subset \tilde{H}$ and $K\subset \tilde{K}$. Let $p$ be the orthogonal projection from $\tilde{H}$ onto $H$ and let $q$ be the orthogonal projection from $\tilde{K}$ onto $K$. The dilation  $\psi$ is trivial if
$$\psi(x)=q\psi(x)p+(1-q)\psi(x)(1-p) $$
for every $x\in X$. The operator state $\phi$ on an operator space $X$ is \textit{maximal} if it has no nontrivial dilation.

Let $X$ be a subspace of TRO $T$ such that $T$ is generated as a TRO by $X$. The operator state $\phi$ on $X$ has the \textit{unique extension property} if any rectangular operator state $\tilde{\phi}$ of $T$ whose restriction to $X$ coincides with $\phi$ is automatically a triple morphism.  A rectangular operator state $\phi:X\rightarrow B(H,K)$ is a \textit{boundary representation} for $X$ if $\phi$ has unique extension property and the unique extension of $\phi$ to $T$ is an irreducible representation of $T$. Let $\phi:X\rightarrow B(H,K)$ is a rectangular operator state of $X$, and $T$ is a TRO containing $X$ as a generating subspace. Then $\phi$ is maximal if and only if it has unique extension property.

\section{Boundary representations}
 We investigate the possible relation between boundary representations of an operator space $X$ and the boundary representations of the Paulsen system $\mathcal S(X)$. Fuller, Hartz and Lupini \cite[Proposition 1.8]{FHL18} proved that a boundary representation of the Paulsen system induces a boundary representation of the operator space. Here, we establish the converse of \cite[Proposition 1.8]{FHL18}.

\begin{thm}\label{opsbdy}
If a rectangular operators state  $\phi : X\rightarrow B(H,K)$ is a boundary representation for $X$, then $\mathcal{S}(\phi)$ is a boundary representation for $\mathcal{S}(X)$.
\end{thm}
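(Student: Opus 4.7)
The plan is to show that every UCP extension of $\mathcal{S}(\phi)$ to $\mathcal{A}:=C^*(\mathcal{S}(X))$ is a $*$-homomorphism and that this extension is irreducible. Fix a UCP extension $\Phi:\mathcal{A}\to B(K\oplus H)$ of $\mathcal{S}(\phi)$ and consider the projection
$$p:=\begin{bmatrix} I_K & 0 \\ 0 & 0 \end{bmatrix}\in\mathcal{S}(X).$$
Since $\Phi(p)=p$ is again a projection and $\Phi$ is UCP on the $C^*$-algebra $\mathcal{A}$, the element $p$ lies in the multiplicative domain of $\Phi$; this forces $\Phi$ to respect the $2\times 2$ block structure of $\mathcal{A}$. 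The $(1,2)$-entry of the resulting block form produces a linear map $\theta:T\to B(H,K)$ with $\theta|_X=\phi$, which is completely contractive with $\|\theta\|_{cb}=1$ and is nondegenerate since $\theta(T)\supseteq\phi(X)$ and $\phi$ is nondegenerate. Therefore $\theta$ is a rectangular operator state on $T$ extending $\phi$, and the UEP of $\phi$ (from $\phi$ being a boundary representation) forces $\theta$ to be a triple morphism.

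The triple morphism $\theta$ promotes via the linking-algebra correspondence of Section 2 to a $*$-homomorphism $\omega:\mathcal{L}(T)\to B(K\oplus H)$, which extends canonically to a $*$-homomorphism $\tilde\omega:\mathcal{A}\to B(K\oplus H)$ by sending the scalar corners to themselves. Both $\Phi$ and $\tilde\omega$ are UCP extensions of $\mathcal{S}(\phi)$, so it remains to show $\Phi=\tilde\omega$. I would do this by applying Stinespring to write $\Phi=V^*\pi V$ for some $*$-representation $\pi:\mathcal{A}\to B(L)$ and isometry $V:K\oplus H\to L$; using that $p$ lies in the multiplicative domain of $\Phi$ once more, decompose $L=L_K\oplus L_H$ with $L_K=\pi(p)L$ and $V=V_1\oplus V_2$, and extract the triple morphism $\Sigma:T\to B(L_H,L_K)$
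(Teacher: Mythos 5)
Your argument sets up the right strategy but stops before the step that actually carries the weight of the theorem. The multiplicative-domain observation is correct and is a nice variation on the paper's opening: since $\Phi(p)=p$ is a projection, $p$ lies in the multiplicative domain, $\Phi$ preserves the block structure, and the $(1,2)$-corner restricted to $T$ is a rectangular operator state extending $\phi$, hence (by the unique extension property) the triple morphism $\theta$. But knowing that the off-diagonal corner of $\Phi$ is multiplicative does \emph{not} yet give $\Phi=\tilde\omega$: the diagonal corners of $\Phi$ are only UCP maps on $TT^*+\mathbb{C}I_K$ and $T^*T+\mathbb{C}I_H$, and the Schwarz inequality gives only $\Phi^{(1,1)}(xx^*)\geq\theta(x)\theta(x)^*$, not equality. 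Your proposal recognizes this (``it remains to show $\Phi=\tilde\omega$'') and begins the Stinespring decomposition $\Phi=V^*\pi V$, $V=V_1\oplus V_2$, with corner triple morphism $\Sigma:T\to B(L_H,L_K)$ --- and then breaks off. The missing content is precisely the heart of the paper's proof: from $\theta(t)=V_1^*\Sigma(t)V_2$ one sees that $\Sigma$ is a dilation of the boundary representation $\theta$; since boundary representations are maximal (\cite[Proposition 1.6]{FHL18}), this dilation is trivial, so $\Sigma(T)V_2H\subseteq V_1K$ and $\Sigma(T)^*V_1K\subseteq V_2H$; combining this with the \emph{minimality} of the Stinespring representation forces $L_K=V_1K$ and $L_H=V_2H$, so $V_1,V_2$ are surjective isometries, $V$ is unitary, and $\Phi=V^*\pi V$ is a $*$-homomorphism agreeing with $\tilde\omega$ on a generating set. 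Without the maximality-plus-minimality step the proof does not close.

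A second, smaller omission: you announce at the outset that the extension is irreducible but never return to it, and irreducibility of the unique extension is part of the definition of boundary representation for $\mathcal{S}(X)$. This part is routine --- any projection commuting with $\tilde\omega(\mathcal{A})$ commutes with the two corner units, hence has the form $q\oplus p$, and $q\theta(t)=\theta(t)p$ for all $t\in T$ together with irreducibility of $\theta$ forces $q\oplus p$ to be $0$ or $I$ --- but it must be said.
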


\begin{proof}
Assume that the rectangular operator state  $\phi : X\rightarrow B(H,K)$ is a boundary representation for $X$. Let  $\theta : T\rightarrow B(H,K)$  be an irreducible representation such that $\theta _{|_{X}}=\phi$. Let $\mathcal A$ be the $C^*$-algebra generated by $\mathcal{S}(X)$ inside $B(K\oplus H)$, then we have
$$\mathcal{A}=\left\lbrace \left[ {\begin{array}{cc}
TT^*+\lambda I_K  & T \\
T^* & T^*T+\mu I_H
\end{array} } \right]    : \lambda, \mu \in \mathbb{C}  \right\rbrace$$
and
  $$\omega=\begin{bmatrix} \omega_{1} & \theta \\
\theta^* &\omega_{2} \end{bmatrix}$$ is a unital  representation of $\mathcal A$ on $K\oplus H$ such that $\omega _{|_{\mathcal{S}(X)}}=\mathcal{S}(\phi)$, where $\omega_1$ and $\omega_2$ are the representations corresponding to $\theta$ of the respective $C^*$-algebras.

We claim that $\omega$ is irreducible. Let $P$ be a non zero projection in $B(K\oplus H)$ that commutes with $\omega(\mathcal{A})$. In particular, $P$ commutes with  $$\omega\left( \begin{bmatrix} 1 & 0 \\ 0 & 0 \end{bmatrix} \right)=\begin{bmatrix} I_{K} & 0 \\ 0 & 0 \end{bmatrix} \text{ and }\omega\left( \begin{bmatrix} 0 & 0 \\ 0 & 1 \end{bmatrix} \right)=\begin{bmatrix} 0 & 0 \\ 0 & I_{H} \end{bmatrix}.$$
Therefore, $P=p\oplus q$
 where $p$ is a projection on $K$ and $q$ is a projection on $H$. Thus, $(p\oplus q)\omega (x)=\omega (x)(p\oplus q)$ for every $x\in \mathcal{A}$ which implies  that $p\theta(a)=\theta(a)q $ for every $a\in T$. Since $\theta $ is an irreducible representation of $T$, it follows that $p=I_{K}$ and $q=I_{H}$ and therefore $P=I_{K\oplus H}$. Hence $\omega$ is an irreducible representation.

Now, to prove $\mathcal{S}(\phi)$ is a boundary representation for $\mathcal{S}(X)$, it is enough to prove the following. If $\Phi: \mathcal{A} \rightarrow B(K\oplus H)$ is any unital completely positive map with the property  $\Phi_{|_{\mathcal{S}(X)}}=\mathcal{S}(\phi)$, then $\Phi =\omega$. Let $\Phi$ be a such a map. By Stinespring's dilation theorem $$\Phi(a)=V^*\rho(a)V,~~ a\in \mathcal{A}$$ where $\rho :\mathcal{A}\rightarrow B(L)$ is the minimal Stinespring representation and $V:K\oplus H \rightarrow L$ is an isometry. Thus, $\omega _{|_{\mathcal{S}(X)}}=\Phi_{|_{\mathcal{S}(X)}}=V^*\rho(\cdot)V_{|_{\mathcal{S}(X)}}$. Since $\rho$ is a unital representation on $L$, we can decompose $L=K_{\rho}\oplus H_{\rho}$, where $K_{\rho}$ is the range of the orthogonal projection $\rho\left( \begin{bmatrix} 1 & 0 \\ 0 & 0 \end{bmatrix} \right) $ and  $H_{\rho}$ is the range of the orthogonal projection $\rho\left( \begin{bmatrix} 0 & 0 \\ 0 & 1 \end{bmatrix} \right)$.  With respect to this decomposition we have,  $$\rho =\begin{bmatrix} \sigma_{1} & \eta \\ \eta ^*  & \sigma_{2} \end{bmatrix}$$
where $\eta : T\rightarrow B(H_{\rho},K_{\rho})$ is a triple morphism and $\sigma _{1}, \sigma_{2}$ are unital representations of the respective $C^*$-algebras.

We claim that  $V=\begin{bmatrix} v_{1} & 0 \\ 0 & v_{2} \end{bmatrix}$, for isometries $v_{1}: K \rightarrow K_{\rho}$ and $v_{2} : H \rightarrow H_{\rho}$.

We have
\begin{align*}
V^* \begin{bmatrix} 1 & 0 \\ 0 & 0 \end{bmatrix} V=& ~
V^* \begin{bmatrix} \sigma_{1}(1) & 0 \\ 0 & 0 \end{bmatrix} V= V^*\rho\left( \begin{bmatrix} 1 & 0 \\ 0 & 0 \end{bmatrix} \right)V \\
=& ~ \Phi \begin{bmatrix} 1 & 0 \\ 0 & 0 \end{bmatrix}
=\mathcal{S}(\phi) \begin{bmatrix} 1 & 0 \\ 0 & 0 \end{bmatrix}
= \begin{bmatrix} 1 & 0 \\ 0 & 0 \end{bmatrix}.
\end{align*}
Similarly,
$V^* \begin{bmatrix} 0 & 0 \\ 0 & 1 \end{bmatrix} V= \begin{bmatrix} 0 & 0 \\ 0 & 1 \end{bmatrix}$  also holds. 

Since $V$ is an isometry, we must have $V=\begin{bmatrix} v_{1} & 0 \\ 0 & v_{2} \end{bmatrix}$ for isometries $v_{1}$ and $v_{2}$.

 Using $\mathcal{S}(\theta) _{|_{\mathcal{S}(X)}}=\mathcal{S}(\phi)=\Phi _{|_{X}}=V^*\rho V$, we have  $\theta(x)=v_{1}^*\eta(x)v_{2} ~\forall ~x\in X .$  Our assumption $\theta$ is a boundary representation for $X$ implies that $\theta(t)=v_1^*\eta(t)v_2~\forall~t\in T$. Using \cite[Proposition 1.6]{FHL18}, $\theta$ is maximal which implies that $\eta$ is a trivial dilation. We have
 $$\eta(t)=q\eta(t)p+(1-q)\eta(t)(1-p) $$
 for every $t\in T$, where $p=v_2v_2^*$ and $q=v_1v_1^*$. The above equation implies that $\eta(T)v_2H\subseteq v_1K$ and $\eta(T)^*v_1K\subseteq v_2H$. Using the minimality assumption \cite[Page 142]{FHL18} of $\eta$, we have $K_\rho$ is the closed linear span of $\eta(T)\eta(T)^*v_1K\cup \eta(T)v_2H$ and $H_\rho$ is the closed linear span of $\eta(T)^*\eta(T)v_2H\cup\eta(T)^*v_1K$.
Straightforward verification shows that $\eta(T)\eta(T)^*v_1K\cup \eta(T)v_2H \subseteq v_{1}{K}$ and $\eta(T)^*\eta(T)v_2H\cup\eta(T)^*v_1K \subseteq v_{2}{H}$, therefore $K_\rho =v_{1}{K}$ and $H_\rho = v_{2}H$.
 Thus, $v_1$ and $v_2$ are onto. Since $v_1$ and $v_2$ are onto isometries, they are unitaries. Therefore $V$ is  unitary.

Since $\rho$ is a representation and $V$  is unitary,  the equation $\Phi(a)=V^*\rho(a)V,~~ a\in \mathcal{A}$  implies that $\Phi$ is a representation of $\mathcal{A}$. Since $\omega$ and $\Phi$ are representations on $\mathcal{A}=C^*(\mathcal{S}(X))$ and $\omega_{|_{\mathcal{S}(X)}}=\Phi_{|_{\mathcal{S}(X)}}$, we have $\Phi=\omega$.
 \end{proof}

 We give a couple of examples to illustrate the above theorem.

\begin{ex}
Let $X\subset B(H,K)$  be an operator space such that the TRO $T$ generated by $X$ acts irreducibly and such that $T\cap \mathcal{K}(H,K)\neq \{0\}$. Then the identity representation of $T$ is a boundary representation for $X$ if and only if the identity representation of $C^*(\mathcal{S}(X))$ is a boundary representation for $\mathcal{S}(X)$.
To see this, first  assume that the identity representation of $T$ is a boundary representation for $X$. Then by  rectangular boundary theorem \cite[Theorem 1.17]{FHL18} the quotient map $B(H,K)\rightarrow B(H,K)/ \mathcal{K}(H,K)$ is not completely isometric on $X$. Using the same line of argument in the proof of the converse part of \cite[Theorem 1.17]{FHL18},  we see that the quotient map $B(K\oplus H)\rightarrow B(K\oplus H)/ \mathcal{K}(K\oplus H)$ is not a completely isometry on $\mathcal{S}(X)$. Then by Arveson's boundary theorem \cite[Theorem 2.1.1]{Arv72}, the identity representation of $C^*(\mathcal{S}(X))$ is a boundary representation for $\mathcal{S}(X)$.

Conversely, if the identity representation of $C^*(\mathcal{S}(X))$ is a boundary representation for $\mathcal{S}(X)$, then by \cite[Proposition 1.8]{FHL18}, identity representation of $T$ is a boundary representation for $X$.
\end{ex}

\begin{ex}
Let $R$ be an operator system in $B(H)$ and let $A=C^*(R)$ be the $C^*$-algebra generated by $R$. In particular, $R$ is an operator space and $A=C^*(R)$ is  itself a TRO generated by $R$. We have $C^*(\mathcal{S}(R))= M_{2}(A) $. Using Hopenwasser's result \cite{AH73} , we can conclude that if $\pi$ is a boundary representation  of  $A$ for $R$, then  $\mathcal{S}(\pi)$ is a boundary representation of $C^*(\mathcal{S}(R))$ for $\mathcal{S}(R)$.
\end{ex}

\begin{cor}\label{spacetopalsym}
If a rectangular operators state  $\phi : X\rightarrow B(H,K)$ has  unique extension property for $X$, then $\mathcal{S}(\phi)$ has the unique extension property for $\mathcal{S}(X)$.
\end{cor}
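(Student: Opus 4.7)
The plan is to mimic the proof of Theorem \ref{opsbdy} and simply discard the irreducibility step, since the corollary requires only the unique extension property and not the full boundary-representation condition. I would start with the triple-morphism extension $\theta : T \to B(H,K)$ of $\phi$ guaranteed by the UEP, and let $\omega : \mathcal{A} \to B(K \oplus H)$ be the associated $*$-representation of $\mathcal{A} = C^*(\mathcal{S}(X))$, so that $\omega_{|_{\mathcal{S}(X)}} = \mathcal{S}(\phi)$. It then remains to show that every unital completely positive $\Phi : \mathcal{A} \to B(K \oplus H)$ extending $\mathcal{S}(\phi)$ is a $*$-representation.

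Next, I would apply Stinespring to get $\Phi(a) = V^* \rho(a) V$ with $\rho : \mathcal{A} \to B(L)$ a minimal Stinespring dilation and $V : K \oplus H \to L$ an isometry. The two diagonal idempotents of $\mathcal{A}$ force a block decomposition $L = K_\rho \oplus H_\rho$, a block form $\rho = \begin{bmatrix} \sigma_1 & \eta \\ \eta^* & \sigma_2 \end{bmatrix}$ with $\eta : T \to B(H_\rho, K_\rho)$ a triple morphism, and a block-diagonal isometry $V = \begin{bmatrix} v_1 & 0 \\ 0 & v_2 \end{bmatrix}$, by the verbatim computation carried out in Theorem \ref{opsbdy}.

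The decisive input that replaces the irreducibility argument is maximality: since $\phi$ has the UEP, $\theta$ is maximal by the last sentence of Section 2, so the identity $\theta(x) = v_1^* \eta(x) v_2$ valid on $X$ extends to all of $T$, and the induced dilation $\eta$ of $\theta$ is trivial. Combined with minimality of $\rho$, the same inclusions $\eta(T)\eta(T)^* v_1 K \cup \eta(T) v_2 H \subseteq v_1 K$ and $\eta(T)^*\eta(T) v_2 H \cup \eta(T)^* v_1 K \subseteq v_2 H$ used in Theorem \ref{opsbdy} identify $K_\rho = v_1 K$ and $H_\rho = v_2 H$, so that $v_1$ and $v_2$ are surjective isometries, hence unitaries, and therefore $V$ is unitary. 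Consequently $\Phi = V^* \rho V$ is a $*$-representation of $\mathcal{A}$ extending $\mathcal{S}(\phi)$, which is precisely the UEP for $\mathcal{S}(\phi)$. The only point that requires care is to observe that the chain of reasoning in Theorem \ref{opsbdy} from the extension $\theta(x) = v_1^* \eta(x) v_2$ down to the unitarity of $V$ relies solely on the maximality of $\theta$ and not on its irreducibility, so the weakened hypothesis here still suffices; no new obstacle arises.
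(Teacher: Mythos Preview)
Your proposal is correct and is precisely the approach the paper intends: the paper's own proof consists of the single sentence ``The proof follows from the same line of argument in Theorem \ref{opsbdy}, without the irreducibility assumption,'' and your walkthrough unpacks exactly that, correctly isolating that irreducibility is used only to establish irreducibility of $\omega$ while the unitarity of $V$ (and hence the UEP conclusion) rests solely on maximality of $\phi$ and minimality of the Stinespring dilation.
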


\begin{proof}
The proof follows from the same line of argument in Theorem \ref{opsbdy}, without the irreducibility assumption.
\end{proof}

\begin{prop}
Suppose $\omega:\mathcal{S}(X)\rightarrow B(L_\omega)$ has  unique extension property on the Paulsen system $\mathcal{S}(X)$ associated with $X$. Then one can decompose $L_\omega$ as an orthogonal direct sum $K_\omega\oplus H_\omega$ in such a way that $\omega=\mathcal{S}(\phi)$ for some rectangular operator state $\phi:X\rightarrow B(H_\omega,K_\omega)$ and $\phi$ on $X$ has  unique extension property.
 \end{prop}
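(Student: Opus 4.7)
The plan is to exploit two standard correspondences. First, since $\omega:\mathcal{S}(X)\to B(L_\omega)$ has the unique extension property on the operator system $\mathcal{S}(X)$, its unique UCP extension to $\mathcal{A}=C^*(\mathcal{S}(X))$ is automatically a $\ast$-representation $\tilde\omega:\mathcal{A}\to B(L_\omega)$ (this is the Arveson dichotomy behind UEP in the operator-system setting, equivalent to maximality in the sense of Fuller--Hartz--Lupini). Because $\tilde\omega$ is unital, the images
\[
p_K := \tilde\omega\!\left(\begin{bmatrix} I_K & 0 \\ 0 & 0 \end{bmatrix}\right), \qquad p_H := \tilde\omega\!\left(\begin{bmatrix} 0 & 0 \\ 0 & I_H \end{bmatrix}\right)
\]
are complementary orthogonal projections on $L_\omega$, yielding the required decomposition $L_\omega = K_\omega \oplus H_\omega$ with $K_\omega := p_K L_\omega$ and $H_\omega := p_H L_\omega$.

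Second, by the linking-algebra correspondence recorded in Section~2, the unital $\ast$-representation $\tilde\omega$ takes block form
\[
\tilde\omega = \begin{bmatrix} \omega_1 & \theta \\ \theta^* & \omega_2 \end{bmatrix}
\]
with respect to $L_\omega = K_\omega \oplus H_\omega$, where $\theta:T\to B(H_\omega,K_\omega)$ is a triple morphism and $\omega_1,\omega_2$ are $\ast$-representations of $TT^*$ and $T^*T$ respectively. I would set $\phi:=\theta|_X$. Comparing blocks of the identity $\tilde\omega|_{\mathcal{S}(X)}=\omega$ against the Paulsen pattern immediately gives $\omega=\mathcal{S}(\phi)$. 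Complete contractivity of $\phi$ and $\|\phi\|_{cb}=1$ are inherited from unitality and complete positivity of $\omega$, and the nondegeneracy of $\phi$ is equivalent to nondegeneracy of $\theta$ on $T$ (since $X$ generates $T$), which is read off from the linking-algebra correspondence applied to the unital $\tilde\omega$.

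To verify UEP of $\phi$, I would take an arbitrary rectangular operator state $\tilde\phi:T\to B(H_\omega,K_\omega)$ with $\tilde\phi|_X=\phi$, and form the UCP map $\mathcal{S}(\tilde\phi):\mathcal{S}(T)\to B(L_\omega)$, which restricts on $\mathcal{S}(X)$ to $\mathcal{S}(\phi)=\omega$. Extending $\mathcal{S}(\tilde\phi)$ to a UCP map $\Omega:\mathcal{A}\to B(L_\omega)$ via Arveson's extension theorem, the UEP of $\omega$ forces $\Omega=\tilde\omega$. Reading the $(1,2)$-block in the decomposition then identifies $\tilde\phi$ with the triple morphism $\theta$, so $\tilde\phi$ is automatically a triple morphism, which is precisely UEP of $\phi$.

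The subtle point I expect to require the most care is the nondegeneracy of $\phi$: the decomposition $L_\omega=K_\omega\oplus H_\omega$ is forced upon us by the projections $p_K,p_H$, so one must rule out vectors in $K_\omega$ annihilated by $\theta(T)^*$ (or symmetrically in $H_\omega$ by $\theta(T)$). The argument will use the UEP of $\omega$: any such degenerate direction would decouple from $\tilde\omega|_{\mathcal{L}(T)}$ and permit a distinct UCP extension of $\omega$ to $\mathcal{A}$ (altering the value of $\tilde\omega$ on the ideal $\mathcal{L}(T)$ only on the degenerate summand, without disturbing the prescribed values on $\mathcal{S}(X)$), contradicting uniqueness.
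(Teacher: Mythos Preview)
Your proposal is correct and follows essentially the same route that the paper intends: the paper's own proof is the single line ``The proof follows as in \cite[Proposition 1.8]{FHL18},'' and what you have written is exactly the argument of that reference, specialized to the unique extension property (dropping irreducibility). The decomposition of $L_\omega$ via the images of the diagonal projections under the unique $\ast$-representation extension $\tilde\omega$, the extraction of the triple morphism $\theta$ in the $(1,2)$ corner, and the verification of UEP for $\phi=\theta|_X$ by lifting an arbitrary completely contractive extension $\tilde\phi$ to $\mathcal{S}(\tilde\phi)$ and invoking UEP of $\omega$ --- all of this matches the FHL18 template the paper cites.

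Your identification of nondegeneracy as the delicate point is apt, and your sketch is on the right track; one clean way to finish it is as follows. If some nonzero $k\in K_\omega$ satisfies $\theta(T)^*k=0$, let $P_k$ be the rank-one projection onto $\mathbb{C}k\subset L_\omega$ and pick any state $\psi$ on $\mathcal{A}$ with $\psi\left(\begin{smallmatrix}\lambda I_K & x\\ y^* & \mu I_H\end{smallmatrix}\right)=\lambda$ that differs from $a\mapsto \langle\tilde\omega(a)k,k\rangle/\|k\|^2$ (such $\psi$ exist whenever $TT^*\not\subset\mathbb{C}I_K$, by composing any state on $TT^*+\mathbb{C}I_K$ with the $(1,1)$-corner compression). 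Then $\Omega(a)=(I-P_k)\tilde\omega(a)(I-P_k)+\psi(a)P_k$ is a UCP map on $\mathcal{A}$ which agrees with $\omega$ on $\mathcal{S}(X)$ but is not equal to $\tilde\omega$, contradicting UEP. This is precisely the ``altering $\tilde\omega$ on the degenerate summand'' mechanism you describe. In the boundary-representation version of the statement (which is literally \cite[Proposition 1.8]{FHL18}) this issue evaporates because irreducibility forces nondegeneracy directly.
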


\begin{proof}
The proof follows as in \cite[Proposition 1.8]{FHL18}.
\end{proof}

\section{Weak boundary representations}

Recently, Namboodiri, Pramod, Shankar, and Vijayarajan \cite{NPSV18} introduced a notion of weak boundary representation, which is a weaker notion than Arveson's \cite{Arv69}  boundary representation for operator systems. They studied relations of weak boundary representation with quasi hyperrigidity of operator systems in \cite{NPSV18}. Here we introduce the notion of weak boundary representations for operator spaces as follows:

\begin{defn}
Let $X\subset B(H,K)$ be a operator space and  T be a TRO containing  $X$ as a generating subspace. An irreducible triple morphism $\psi : T\rightarrow B(H,K)$ is called a weak boundary representation for $X$ if  $\psi_{|_{X}}$ has a unique rectangular operator state extension of the form $v^*\psi u$ , namely $\psi$ itself,  where $v:H\rightarrow H$ and $u:K\rightarrow K$ are isometries.
\end{defn}

Suppose $X$ is operator system, $H=K$, $v=u$. Then the above notion of weak boundary representation recovers the weak boundary representation for operator systems. We can observe that all the boundary representations are weak boundary representations for operator spaces.

Now, we investigate the relations between  weak boundary representation of an operator space and the weak boundary representation of it's Paulsen system.

\begin{prop} \label{weak}
Suppose $\omega:C^*(\mathcal S(X))\rightarrow B(L_\omega)$ is a weak boundary representation for the Paulsen system $\mathcal S(X)$ associated with $X$. Then one can decompose $L_\omega$ as an orthogonal direct sum $K_\omega \oplus H_\omega$ in such a way that $\omega=\mathcal S(\theta)$ for some weak boundary representation $\theta: T\rightarrow B(H_\omega,K_\omega)$ for $X$.
\end{prop}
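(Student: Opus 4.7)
The plan is to invert the construction of Theorem~\ref{opsbdy}: first extract the triple morphism $\theta$ from $\omega$ via the linking-algebra correspondence, and then transfer the weak-boundary uniqueness of $\omega$ on the Paulsen system back to $\theta$ on $X$ using a block-diagonal isometry. To begin, I would split $L_\omega$ by applying $\omega$ to the diagonal Paulsen projections $\begin{bmatrix}I & 0\\ 0 & 0\end{bmatrix}$ and $\begin{bmatrix}0 & 0\\ 0 & I\end{bmatrix}$; their images are orthogonal projections on $L_\omega$ whose ranges I call $K_\omega$ and $H_\omega$, so that $L_\omega=K_\omega\oplus H_\omega$. Restricting $\omega$ to the linking algebra $\mathcal L(T)\subset C^*(\mathcal S(X))$ gives a $*$-representation of $\mathcal L(T)$, and the 1-1 correspondence between representations of $\mathcal L(T)$ and of $T$ recalled in Section~2 produces a triple morphism $\theta:T\to B(H_\omega,K_\omega)$ with
$$\omega|_{\mathcal L(T)}=\begin{bmatrix}\omega_1 & \theta\\ \theta^* & \omega_2\end{bmatrix}.$$
A straightforward check on a general element of $\mathcal S(X)$ then shows $\omega|_{\mathcal S(X)}=\mathcal S(\theta)$, giving the desired decomposition. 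Since $C^*(\mathcal S(X))$ is a unitalization of $\mathcal L(T)$, its commutant in $B(L_\omega)$ coincides with that of $\mathcal L(T)$, so the irreducibility of $\omega$ descends to $\omega|_{\mathcal L(T)}$ and hence to $\theta$.

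Next I would prove the weak-boundary property of $\theta$ for $X$. Suppose $u:K_\omega\to K_\omega$ and $v:H_\omega\to H_\omega$ are isometries with $u^*\theta(x)v=\theta(x)$ for every $x\in X$, and form the block isometry $U=\begin{bmatrix}u & 0\\ 0 & v\end{bmatrix}$ on $L_\omega$. Using the hypothesis on $\theta|_X$ together with $u^*u=I_{K_\omega}$ and $v^*v=I_{H_\omega}$, a direct matrix computation confirms that $U^*\omega(\cdot)U$ agrees with $\omega$ on the entire Paulsen system $\mathcal S(X)$. Thus $U^*\omega U$ is a UCP extension of $\omega|_{\mathcal S(X)}$ to $C^*(\mathcal S(X))$ of the form $V^*\omega V$ with $V=U$ isometric, and the weak-boundary property of $\omega$ forces $U^*\omega(a)U=\omega(a)$ for every $a\in C^*(\mathcal S(X))$. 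Taking $a=\begin{bmatrix}0 & t\\ 0 & 0\end{bmatrix}$ with $t\in T$ and reading off the $(1,2)$ corner yields $u^*\theta(t)v=\theta(t)$ for every $t\in T$, which together with the irreducibility of $\theta$ established above makes $\theta$ a weak boundary representation for $X$.

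The main obstacle is orchestrating the transfer between the three ambient algebras: the TRO $T$, its linking algebra $\mathcal L(T)$, and the unitalization $C^*(\mathcal S(X))$. The key observation to exploit is that pulling $\omega$ back along the block isometry $U=u\oplus v$ is exactly the Paulsen-side counterpart of sandwiching $\theta$ by the individual isometries $u$ and $v$, so that the hypothesis on $\theta|_X$ produces a UCP extension of $\omega|_{\mathcal S(X)}$ which, by the uniqueness encoded in the weak-boundary property of $\omega$, must be $\omega$ itself; once this matching is in place, everything else reduces to routine matrix bookkeeping in the corner decomposition of $L_\omega$.
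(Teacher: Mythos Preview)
Your proposal is correct and follows essentially the same route as the paper: decompose $L_\omega$ via the diagonal Paulsen projections to extract the irreducible triple morphism $\theta$, then promote a pair of isometries $(u,v)$ witnessing a compression of $\theta|_X$ to the block isometry $U=u\oplus v$, observe that $U^*\omega(\cdot)U$ agrees with $\omega$ on $\mathcal S(X)$, invoke the weak-boundary property of $\omega$ to get equality on all of $C^*(\mathcal S(X))$, and read off the $(1,2)$ corner at elements of $T$. One small remark: your sentence ``its commutant in $B(L_\omega)$ coincides with that of $\mathcal L(T)$'' is a bit loose, since $C^*(\mathcal S(X))$ adjoins two projections rather than just the identity; the cleaner way to get irreducibility of $\theta$ is to note that if $q\theta(t)=\theta(t)p$ for all $t\in T$, then $q\oplus p$ commutes with $\omega(C^*(\mathcal S(X)))$ (it automatically commutes with the two diagonal projections and, via $\omega_1(ab^*)=\theta(a)\theta(b)^*$, with the corner algebras), and then apply irreducibility of $\omega$. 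The paper simply asserts this step by appeal to the linking-algebra correspondence, so your argument is in fact more explicit.
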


\begin{proof}
Since $\omega$ is an irreducible representation of $C^*(\mathcal{S}(X))$ on $L_\omega$, we can decompose $L_\omega=K_\omega\oplus H_\omega$ such that
$$\omega =\begin{bmatrix}
\omega_1 & \theta \\
\theta^* & \omega_2
\end{bmatrix}, $$
where $\theta:T\rightarrow B(H_\omega, K_\omega)$ is an irreducible representation.

Now, we will prove that $\theta$ is weak boundary representation for $X$. Let  $u:H_\omega \rightarrow H_\omega$ and $v:K_\omega \rightarrow K_\omega$ be isometries
such that $v^*\theta (a) u=\theta(a)~\forall~a\in X$. For every $a,b \in X$,
\begin{eqnarray*}
\begin{bmatrix}
 v^* & 0\\
0 & u^*\end{bmatrix} \omega \begin{bmatrix}
 \lambda _{1}I_{K} & a\\
b^* & \lambda _{2}I_{H}\end{bmatrix} \begin{bmatrix}
 v & 0\\
0 & u\end{bmatrix} &=&
\begin{bmatrix}
 v^* & 0\\
0 & u^*\end{bmatrix} \begin{bmatrix}
 \lambda _{1}I_{K} & \theta(a)\\
\theta(b)^* & \lambda _{2}I_{H}\end{bmatrix} \begin{bmatrix}
 v & 0\\
0 & u\end{bmatrix} \\ &=&  \begin{bmatrix}
 \lambda _{1}I_{K} & v^*\theta(a)u\\
(v^*\theta(b)u)^* & \lambda _{2}I_{H}\end{bmatrix}\\
&=&  \begin{bmatrix}
 \lambda _{1}I_{K} & \theta(a)\\
\theta(b)^* & \lambda _{2}I_{H}\end{bmatrix}.
\end{eqnarray*}
Since $\begin{bmatrix}
 v & 0\\
0 & u\end{bmatrix}$ is an isometry on $K_\omega\oplus H_\omega $ and $\omega$ is a weak boundary representation for $\mathcal S(X)$ we have for all $a\in T$,
\begin{eqnarray*}
\begin{bmatrix}
 v^* & 0\\
0 & u^*\end{bmatrix} \omega \left(\begin{bmatrix}
 0 &  a\\
0 & 0\end{bmatrix}\right) \begin{bmatrix}
 v & 0\\
0 & u\end{bmatrix} &=&
\omega \left(\begin{bmatrix}
 0 & a\\
0 & 0\end{bmatrix}\right) \\
\begin{bmatrix}
 v^* & 0\\
0 & u^*\end{bmatrix}  \begin{bmatrix}
 0 &  \theta(a)\\
0 & 0\end{bmatrix} \begin{bmatrix}
 v & 0\\
0 & u\end{bmatrix} &=&
 \begin{bmatrix}
 0 & \theta(a)\\
0 & 0\end{bmatrix} \\
\begin{bmatrix}
 0 &  v^*\theta(a)u\\
0 & 0\end{bmatrix}  &=&
 \begin{bmatrix}
 0 & \theta(a)\\
0 & 0\end{bmatrix}. \\
\end{eqnarray*}
From the last equality $v^*\theta(a)u=\theta(a)$, for every $a\in T$. Thus $\theta$ a is weak boundary representation for $X$.
\end{proof}

\begin{prop}\label{converse weak}
If $\theta:T\rightarrow B(H,K)$ is a weak boundary representation for $X$, then the corresponding representation $\omega$ of $C^*(\mathcal{S}(X))$ on $B(K\oplus H)$ is a weak boundary representation for the Paulsen system $\mathcal{S}(X)$.
\end{prop}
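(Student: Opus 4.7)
The plan is to run Proposition~\ref{weak} in reverse. Take an arbitrary isometry $V : K \oplus H \to K \oplus H$ with $V^*\omega(s)V = \omega(s)$ for every $s \in \mathcal S(X)$; the goal is to promote this identity to all of $C^*(\mathcal S(X))$.

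The first step is to read off the block form of $V$. Writing $V = \begin{bmatrix} v & a \\ b & u \end{bmatrix}$ and testing the hypothesis against the two corner elements $\begin{bmatrix} 1 & 0 \\ 0 & 0 \end{bmatrix}$ and $\begin{bmatrix} 0 & 0 \\ 0 & 1 \end{bmatrix}$ of $\mathcal S(X)$, the same block computation used in Proposition~\ref{weak} forces $a = b = 0$ and identifies $v : K \to K$, $u : H \to H$ as isometries. Testing against $\begin{bmatrix} 0 & x \\ 0 & 0 \end{bmatrix}$ for $x \in X$ then yields $v^*\theta(x)u = \theta(x)$, and the weak boundary hypothesis on $\theta$ promotes this to $v^*\theta(t)u = \theta(t)$ for every $t \in T$.

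With this extended intertwining in hand I would verify $V^*\omega(a)V = \omega(a)$ block by block. The scalar and the off-diagonal $T$-blocks are now immediate. For the $T^*T$-block, a direct computation gives
\[
\theta(x)^*\theta(y) = (u^*\theta(x)^*v)(v^*\theta(y)u) = u^*\theta(x)^*(vv^*)\theta(y)u = u^*\theta(x)^*\theta(y)u,
\]
where the key absorption $(vv^*)\theta(y) = \theta(y)$ is a one-line consequence of $v^*v = I_K$: right-multiplying the adjoint identity $\theta(y)^* = u^*\theta(y)^*v$ by $I_K - vv^*$ annihilates it because $(I_K - vv^*)v = 0$, so $\theta(y)^* = \theta(y)^*(vv^*)$, i.e., $(vv^*)\theta(y) = \theta(y)$. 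The $TT^*$-block requires the mirror absorption $\theta(t)(uu^*) = \theta(t)$ for all $t \in T$.

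The principal obstacle is this mirror absorption, since the naive symmetric manipulation with $u^*u = I_H$ does not close directly. My workaround is to upgrade $V$ itself to a unitary. The already-established absorption $(vv^*)\theta(t) = \theta(t)$, combined with the nondegeneracy $\overline{\theta(T)H} = K$ (which is automatic from the irreducibility of $\theta$), forces $vv^* = I_K$, so $v$ is unitary. With $v$ unitary, the extended intertwining rewrites as $v\theta(t) = \theta(t)u$, and a dual nondegeneracy argument leveraging $\overline{\theta(T)^*K} = H$ is then run to force $uu^* = I_H$ as well. Once $V$ is unitary, $V^*\omega(\,\cdot\,)V$ is a $*$-homomorphism of $C^*(\mathcal S(X))$ that agrees with $\omega$ on the generating system $\mathcal S(X)$, and uniqueness of $*$-representations on a generating set yields the desired equality on all of $C^*(\mathcal S(X))$, completing the proof.
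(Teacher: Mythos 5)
You have the right opening: forcing $V$ to be block diagonal via the two scalar corners, extracting $v^*\theta(x)u=\theta(x)$ on $X$, and invoking the weak boundary hypothesis to upgrade this to $v^*\theta(t)u=\theta(t)$ on all of $T$ is exactly how the paper begins. The proof breaks at the absorption step, which is in fact the crux of the whole proposition. From the adjoint identity $\theta(y)^*=u^*\theta(y)^*v$ you right-multiply by $I_K-vv^*$ and claim the right-hand side is annihilated ``because $(I_K-vv^*)v=0$''. But the product that actually appears is $v(I_K-vv^*)$, not $(I_K-vv^*)v$, and for a non-unitary isometry $v(I_K-vv^*)=v-vvv^*\neq 0$ (for the unilateral shift this is the restriction of $v$ to the one-dimensional cokernel, which is an isometry of rank one). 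The two identities that do hold are $(I_K-vv^*)v=0$ and $v^*(I_K-vv^*)=0$; in your expression the projection sits on the wrong side of $v$. Consequently the absorption $(vv^*)\theta(y)=\theta(y)$, i.e.\ $\mathrm{ran}\,\theta(y)\subseteq\mathrm{ran}\,v$, is not obtained, and everything downstream --- the $TT^*$ and $T^*T$ blocks, and the promotion of $V$ to a unitary via nondegeneracy --- is predicated on it. The asymmetry you flagged for the ``mirror'' absorption is not special to that side: your own side has the identical defect.

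The missing input is structural rather than algebraic. The relation $v^*\theta(\cdot)u=\theta(\cdot)$ on all of $T$ exhibits $\theta$ as a dilation of itself via the isometries $u,v$; since a nondegenerate triple morphism is a maximal rectangular operator state (equivalently, has the unique extension property trivially, as here $X=T$), this dilation must be trivial, giving $\theta(t)=q\theta(t)p+(1-q)\theta(t)(1-p)$ with $p=uu^*$, $q=vv^*$. This makes $(pH,qK)$ a nonzero $\theta$-invariant pair, and irreducibility of $\theta$ then forces $p=I_H$ and $q=I_K$, so $u$, $v$ and hence $V$ are unitary; only at that point is $V^*\omega(\cdot)V$ a $*$-homomorphism agreeing with $\omega$ on a generating set, which closes the argument. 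This is the shape of the paper's proof (it records the triviality in the form $q\theta(t)p=\theta(t)$ and then lets irreducibility do the work). Your final paragraph has the correct endgame, but it cannot be reached without first supplying the maximality-plus-irreducibility step that your one-line absorption was standing in for.
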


\begin{proof} Arguing as  in the proof of Theorem \ref{opsbdy}, we have $\omega$ is an irreducible representation. Let $V$ be an isometry on $K\oplus H$ such that $V^*\omega V_{|_{\mathcal{S}(X)}}=\omega_{|_{\mathcal{S}(X)}}$. As $$V^*\begin{bmatrix}1& 0 \\ 0 & 0\end{bmatrix}V=\begin{bmatrix}1& 0 \\ 0 & 0\end{bmatrix}$$
and
$$V^*\begin{bmatrix}0& 0 \\ 0 & 1\end{bmatrix}V=\begin{bmatrix}0& 0 \\ 0 & 1\end{bmatrix},$$
we can factorize $V=\begin{bmatrix}v_{1}& 0 \\ 0 & v_{2}\end{bmatrix}$, where $v_{1}$ and $v_{2}$ are isometries on $K$ and $H$ respectively. Also we have $v_{1}^*\theta v_{2{|_X}}=\theta _{{|_X}}$. Our assumption that $\theta $ is a weak boundary representation for $X$ implies that $v_{1}^*\theta(t)v_{2}=\theta(t)$ for all $t\in T$. Thus, we have $q\theta(t)p=\theta(t)$ for all $t\in T$, where $q$ and $p$ are the projections onto range of $v_1$ and range of $v_2$ respectively. Now, for each $t\in T$, $q\theta(t)=q(q\theta(t)p)=q\theta(t)p=(q\theta(t)p)p=\theta(t)p$. Since $\theta$ is irreducible, we have $p=I_H$ and $q=I_K$.
Therefore $v_{1}$ and $v_{2}$ are unitaries. Consequently, $V$ is a unitary. Thus $V^*\omega V$ is a representation of $C^*(\mathcal{S}(X))$. Since $V^*\omega V= \omega$ on $\mathcal{S}(X)$, we must have   that $V^*\omega V= \omega$ on $C^*(\mathcal{S}(X))$.
\end{proof}

\section{Characterisation of  boundary representations for operator spaces}

Arveson \cite{Arv69} introduced the notion of finite representations in the setting of subalgebras of $C^*$-algebras. Namboodiri, Pramod, Shankar and Vijayarajan \cite{NPSV18} explored the relation between finite representations and weak boundary representations in the context of operator systems. Here, we introduce the notion of finite representation  in the setting of operator spaces.

\begin{defn}
Let $X$ be an operator space generating a TRO $T$. Let $\phi : T\rightarrow B(H,K)$ be a representation. We say that $\phi$ is a \textit{finite} representation for $X$  if  for every isometries $u:H\rightarrow H$ and $v:K\rightarrow K$, the condition  $v^*\phi(x)u=\phi(x)$, for all $x\in X$ implies that $u$ and $v$ are unitaries.
\end{defn}

It is clear that, when $X$ is operator system, $H=K$, $v=u$, the above notion of finite representation recovers the Arveson's  notion of finite representation.

\begin{prop}\label{finite}
Let $\omega:C^*(\mathcal{S}(X))\rightarrow B(L)$ be a finite representation for the Paulsen system $\mathcal{S}(X)$ associated with $X$. Then one can decompose $L$ as an orthogonal direct sum $K\oplus H$ in such a way that $\omega=\mathcal{S}(\phi)$ for some finite representation  $\phi:T \rightarrow B(H, K)$ for $X$.
\end{prop}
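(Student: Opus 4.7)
The plan is to closely parallel the argument already used in Proposition \ref{weak}, replacing the weak boundary condition by the finiteness condition. First I would extract the candidate triple morphism $\phi$ from $\omega$. Since $\omega$ is a unital $*$-representation of $C^*(\mathcal{S}(X))$, applying it to the two orthogonal diagonal projections $\begin{bmatrix}1&0\\0&0\end{bmatrix}$ and $\begin{bmatrix}0&0\\0&1\end{bmatrix}$ yields the decomposition $L=K\oplus H$. The correspondence recalled in Section~2 between representations of the (unitalized) linking algebra and triple morphisms of $T$ then produces a triple morphism $\phi:T\to B(H,K)$ with $\omega=\mathcal{S}(\phi)$.

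The main step is to show that this $\phi$ is a finite representation for $X$. Assume $u:H\to H$ and $v:K\to K$ are isometries satisfying $v^*\phi(x)u=\phi(x)$ for every $x\in X$. Form the block-diagonal operator $V=\begin{bmatrix} v & 0 \\ 0 & u \end{bmatrix}$ on $K\oplus H$; it is immediately an isometry because $V^*V=(v^*v)\oplus(u^*u)=I_L$. A short block-matrix calculation, using the isometry identities $v^*v=I_K$ and $u^*u=I_H$ on the scalar diagonal entries of a typical element of $\mathcal{S}(X)$, and using the hypothesis $v^*\phi(x)u=\phi(x)$ (together with its adjoint) on the off-diagonal $X$-entries, gives $V^*\omega(s)V=\omega(s)$ for every $s\in\mathcal{S}(X)$.

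Finiteness of $\omega$ for $\mathcal{S}(X)$ then forces $V$ to be unitary. Since $V$ is block-diagonal, surjectivity of $V$ is equivalent to surjectivity of both $v$ and $u$, so both must be unitaries, which is precisely what it means for $\phi$ to be a finite representation for $X$.

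I do not anticipate a serious obstacle: the argument is essentially the block-diagonal isometry trick already employed in Theorem \ref{opsbdy} and Propositions \ref{weak} and \ref{converse weak}, transported from the weak boundary setting to the finiteness setting. The one point worth being explicit about is that the decomposition $L=K\oplus H$ is forced canonically by the images of the two matrix-unit projections in $C^*(\mathcal{S}(X))$, so there is really only one natural candidate for $\phi$, and one does not need to invoke irreducibility of $\omega$ anywhere.
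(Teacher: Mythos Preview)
Your proposal is correct and follows essentially the same route as the paper: the paper obtains $\phi$ by citing the decomposition argument of \cite[Proposition~1.8]{FHL18} (which is exactly the diagonal-projection decomposition you spell out), and then carries out the identical block-diagonal isometry computation to pass from $v^*\phi(x)u=\phi(x)$ on $X$ to $V^*\omega V|_{\mathcal{S}(X)}=\omega|_{\mathcal{S}(X)}$, concluding by finiteness of $\omega$ that $V$, hence $u$ and $v$, are unitary. Your write-up is in fact slightly more explicit than the paper's about why the decomposition $L=K\oplus H$ is canonical.
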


\begin{proof}
We can get a triple morphism $\phi:T \rightarrow B(H, K)$ as in the proof of \cite[Proposition 1.8]{FHL18}. Now, we will prove that $\phi$ is a finite representation for $X$.

Let $u:H\rightarrow H$ and $v:K\rightarrow K$ be isometries such that $v^*\phi(x)u=\phi(x)~\forall~x\in X$. Then for all $x,y\in X$
\begin{align*}
\begin{bmatrix}v^* & 0 \\ 0 & u^*\end{bmatrix}\omega\left( \begin{bmatrix}\lambda I_K & x \\ y ^* & \mu I_H \end{bmatrix}\right) \begin{bmatrix}v & 0 \\ 0 & u\end{bmatrix}=& ~ \begin{bmatrix}v^* & 0 \\ 0 & u^*\end{bmatrix}\begin{bmatrix}\lambda I_K & \phi(x) \\ \phi (y)^* & \mu I_H \end{bmatrix}\begin{bmatrix}v & 0 \\ 0 & u\end{bmatrix} \\
=&~\begin{bmatrix}\lambda I_K & v^*\phi(x) u \\ u^*\phi(y) ^*v & \mu  I_H \end{bmatrix}\\
=&~\mathcal{S}(\phi)\left( \begin{bmatrix}\lambda I_K & x \\ y ^* & \mu I_H \end{bmatrix}\right).
\end{align*}
Thus, $\begin{bmatrix}v^* & 0 \\ 0 & u^*\end{bmatrix}\omega   \begin{bmatrix}v & 0 \\ 0 & u\end{bmatrix}_{|_{\mathcal{S}(X)}}= \omega _{|_{\mathcal{S}(X)}}$. Since $\omega$ is a finite representation for $\mathcal{S}(X)$, we have $\begin{bmatrix}v & 0 \\ 0 & u\end{bmatrix}$ is a unitary and consequently $u$ and $v$ are unitaries. Hence $\phi$ is a finite representation for $X$.
\end{proof}

\begin{prop}\label{converse finite}
If $\phi: T\rightarrow B(H,K)$ is a finite representation for $X$, then $\omega=\begin{bmatrix} \omega_{1} & \phi \\
\phi^* &\omega_{2}\end{bmatrix}:C^*(\mathcal{S}(X))\rightarrow B(K\oplus H)$ is finite representation for $\mathcal{S}(X)$.
\end{prop}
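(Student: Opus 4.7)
The plan is to show directly that any isometry $V$ on $K \oplus H$ satisfying $V^* \omega(\cdot) V|_{\mathcal{S}(X)} = \omega|_{\mathcal{S}(X)}$ must be unitary, mirroring in structure the argument in Proposition \ref{converse weak} but drawing the final conclusion from the finiteness of $\phi$ rather than its weak boundary property.

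First I would exploit the fact that the rank-one diagonal projections $\begin{bmatrix} I_K & 0 \\ 0 & 0 \end{bmatrix}$ and $\begin{bmatrix} 0 & 0 \\ 0 & I_H \end{bmatrix}$ already live inside $\mathcal{S}(X)$ (take $x=y=0$ and suitable scalars). Applying the intertwining relation to these two projections, and using that $V^*V = I_{K \oplus H}$, a short block-matrix computation forces the off-diagonal entries of $V$ to vanish; writing $V = \begin{bmatrix} v & 0 \\ 0 & u \end{bmatrix}$ one reads off that $v$ is an isometry on $K$ and $u$ is an isometry on $H$.

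Next I would feed in the purely off-diagonal elements $\begin{bmatrix} 0 & x \\ 0 & 0 \end{bmatrix}$ with $x \in X$. The block-diagonal form of $V$ makes the conjugation transparent: the top-right corner of $V^* \omega(\cdot) V$ evaluated on such an element is $v^*\phi(x)u$, which must coincide with $\phi(x)$ by hypothesis. So $v^*\phi(x)u = \phi(x)$ for every $x \in X$. At this stage the finite representation hypothesis on $\phi$ kicks in: it guarantees that $u$ and $v$ are each unitaries, and consequently $V$ itself is a unitary on $K \oplus H$. This is exactly the condition for $\omega$ to be a finite representation for $\mathcal{S}(X)$.

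I do not anticipate any serious obstacle; the argument is essentially the finite-representation analogue of the weak boundary proof already recorded, and both the block-diagonalization step and the final invocation of the definition of finite representation are routine. The only point to be mindful of is that the projections $\begin{bmatrix} I_K & 0 \\ 0 & 0 \end{bmatrix}$ and $\begin{bmatrix} 0 & 0 \\ 0 & I_H \end{bmatrix}$ are elements of $\mathcal{S}(X)$ and not merely of $C^*(\mathcal{S}(X))$, so that the hypothesis $V^*\omega V|_{\mathcal{S}(X)} = \omega|_{\mathcal{S}(X)}$ is genuinely enough to block-diagonalize $V$.
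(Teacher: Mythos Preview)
Your proposal is correct and follows essentially the same route as the paper's proof: block-diagonalise $V$ using the two diagonal projections in $\mathcal{S}(X)$, read off $v^*\phi(x)u=\phi(x)$ from the off-diagonal corner, and then invoke the finiteness of $\phi$ to force $u$, $v$ (hence $V$) to be unitary. The only point the paper records that you leave implicit is that $\omega$ is indeed a representation of $C^*(\mathcal{S}(X))$, which it handles by referring back to the argument in Theorem~\ref{opsbdy}.
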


\begin{proof}Arguing as the in the proof of Theorem \ref{opsbdy}, we have $\omega$ is a representation.
Let $V:K\oplus H\rightarrow K\oplus H $ be an isometry such that $V^*\omega(a) V=\omega(a)$ for all $a\in \mathcal{S}(X)$. Since
$$ V^*\begin{bmatrix}1 & 0 \\ 0 & 0\end{bmatrix}V=\begin{bmatrix}1 & 0 \\ 0 & 0\end{bmatrix}\text{ and }  V^*\begin{bmatrix}0 & 0 \\ 0 & 1\end{bmatrix}V=\begin{bmatrix}0 & 0 \\ 0 & 1\end{bmatrix},$$
 we can decompose $V$ as $\begin{bmatrix}v & 0 \\ 0 & u\end{bmatrix}$, where $v:K\rightarrow K$ and $u:H\rightarrow H$ are isometries. For $a\in \mathcal{S}(X)$, we have

$$V^*\omega(a)V=\begin{bmatrix}v^* & 0 \\ 0 & u^*\end{bmatrix} \begin{bmatrix} \omega_{1} & \phi \\
\phi^* &\omega_{2}\end{bmatrix}(a) \begin{bmatrix}v & 0 \\ 0 & u\end{bmatrix}  =\begin{bmatrix} \omega_{1} & \phi \\
\phi^* &\omega_{2}\end{bmatrix}(a)=\omega(a).$$

Therefore $v^*\phi(x)u=\phi(x)$ for every $x\in X$. Since $\phi$ is a finite representation for $X$, $v$ and $u$ are unitaries, $V$ is  unitary. Hence $\omega$ is a finite representation for $\mathcal{S}(X)$.
\end{proof}

The following theorem shows a relation between finite representations and  weak boundary representations.
\begin{thm}
Let $X$ be an operator space generating a TRO T. Let $\phi$ be an irreducible representation of $T$. Then $\phi$ is a finite representation for $X$ if and only if $\phi$ is a weak boundary representation for $X$.
\end{thm}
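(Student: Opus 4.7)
The plan is to transfer the biconditional to the operator system $\mathcal{S}(X)$ via the Paulsen system and then invoke the analogous operator-system result of \cite{NPSV18}. The first step is to attach to $\phi$ the associated $*$-representation $\omega = \begin{bmatrix}\omega_1 & \phi \\ \phi^* & \omega_2\end{bmatrix} : C^*(\mathcal{S}(X))\rightarrow B(K\oplus H)$ via the linking algebra, and observe that the irreducibility of $\phi$ forces the irreducibility of $\omega$ by the same projection argument already used in the proof of Theorem \ref{opsbdy}. Having this, the correspondences already established earlier in the paper immediately translate the two properties for $\phi$ into the corresponding properties for $\omega$: Propositions \ref{weak} and \ref{converse weak} give that $\phi$ is a weak boundary representation for $X$ if and only if $\omega$ is a weak boundary representation for $\mathcal{S}(X)$, and Propositions \ref{finite} and \ref{converse finite} give that $\phi$ is a finite representation for $X$ if and only if $\omega$ is a finite representation for $\mathcal{S}(X)$. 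At this point the theorem reduces to the operator-system assertion that, for an irreducible $*$-representation of $C^*(\mathcal{S}(X))$, weak boundary and finiteness coincide, which is the corresponding result of \cite{NPSV18}.

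For transparency, the direction ``finite implies weak boundary'' can alternatively be handled directly without passing through the Paulsen system. Given isometries $u:H\to H$ and $v:K\to K$ satisfying $v^*\phi(x)u=\phi(x)$ for every $x\in X$, finiteness of $\phi$ promotes $u$ and $v$ to unitaries, so $uu^*=I_H$ and $vv^*=I_K$. Inserting these identities in the middle of the triple-product computation yields $v^*\phi(xy^*z)u = (v^*\phi(x)u)(u^*\phi(y)^*v)(v^*\phi(z)u) = \phi(xy^*z)$ for $x,y,z\in X$, and linearity and norm-continuity propagate the identity to all of $T$. Combined with the standing irreducibility hypothesis on $\phi$, this is exactly the weak boundary property.

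The genuinely nontrivial half is the converse direction. Here the weak boundary hypothesis only supplies the relation $v^*\phi(t)u=\phi(t)$ for every $t\in T$; deducing from this alone that the isometries $u, v$ must in fact be unitaries is the main obstacle, and it is precisely at this point that the reduction through the Paulsen system and the operator-system result of \cite{NPSV18} become essential, since they allow the irreducibility of $\omega$ as a $*$-representation of $C^*(\mathcal{S}(X))$ to be harnessed through the already developed operator-system theory.
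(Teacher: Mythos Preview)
Your proposal is correct and follows essentially the same route as the paper: both transfer the question to the Paulsen system via Propositions \ref{weak}, \ref{converse weak}, \ref{finite}, \ref{converse finite} and then invoke \cite[Proposition 3.5]{NPSV18}. Your additional direct argument for the implication ``finite $\Rightarrow$ weak boundary'' is a valid self-contained alternative not present in the paper, but the main line of proof is identical.
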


\begin{proof}
Suppose $\phi:T\rightarrow B(H,K)$ is an irreducible finite representation for $X$. By  Proposition \ref{converse finite},  $\omega:C^*(\mathcal{S}(X))\rightarrow B(K\oplus H)$ is an irreducible finite representation for the Paulsen system $\mathcal{S}(X)$. Using \cite[Proposition 3.5]{NPSV18}, we get that $\omega$ is a weak boundary representation of $C^*(\mathcal{S}(X))$ for $\mathcal{S}(X)$. Therefore, Proposition \ref{weak} implies that $\phi$ is a weak boundary representation for $X$.

Conversely, suppose $\phi:T\rightarrow B(H,K)$ is a weak boundary representation of $T$ for $X$. Hence by Proposition \ref{converse weak}, we have $\omega:C^*(\mathcal{S}(X))\rightarrow B(K\oplus H)$  is a weak boundary representation for $\mathcal{S}(X)$. Using \cite[Proposition 3.5]{NPSV18}, we get that $\omega$ is an irreducible finite representation for $\mathcal{S}(X)$. Therefore, Proposition \ref{finite} implies that $\phi$ is an irreducible finite representation for $X$.

\end{proof}

Arveson\cite{Arv69} introduced the notion of separating subalgbras to characterize  boundary representations in the context of subalgebras of $C^*$-algebas. Pramod, Shankar and Vijayarajan \cite{PSV17} studied the separating notion in the setting of operator systems and explored the relation with boundary representations. Here, we introduce the notion of separating operator space as follows:

\begin{defn}
Let $X$ be an operator space generating a TRO $T$. Let $\phi : T\rightarrow B(H,K)$ be an irreducible representation. We say that $X$ \textit{separates} $\phi$  if for every irreducible representation $\psi : T\rightarrow B(\tilde{H},\tilde{K}) $ and isometries $u:H\rightarrow \tilde{H}$ and $v:K\rightarrow \tilde{K}$ , $v^*\psi(x)u=\phi(x)$, for all $x\in X$ implies that $\phi$ and $\psi$ are unitarily equivalent.  Also, the operator space $X$ is called a \textit{ separating operator space } if it separates every irreducible representations of $T$.
\end{defn}

It is clear that, when $X$ is operator system, $H=K$ and $v=u$, the notion of separating operator space recovers the notion of separating operator system.

\begin{prop}\label{separates}
Suppose $\omega:C^*(\mathcal{S}(X))\rightarrow B(L)$ is an irreducible representation and $\mathcal{S}(X)$  separates $\omega$. Then one can decompose $L$ as an orthogonal direct sum $K\oplus H$ in such a way that $\omega=\mathcal{S}(\phi)$ for some irreducible representation $\phi:T\rightarrow B(H,K)$ and $X$ separates $\phi$.
\end{prop}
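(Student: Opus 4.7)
The plan is to mirror the pattern of Proposition \ref{weak} and Proposition \ref{finite}. Let $K$ and $H$ be the ranges of the orthogonal projections $\omega\left(\begin{bmatrix}1 & 0 \\ 0 & 0\end{bmatrix}\right)$ and $\omega\left(\begin{bmatrix}0 & 0 \\ 0 & 1\end{bmatrix}\right)$ respectively; these projections are orthogonal and sum to the identity, so $L = K \oplus H$ and with respect to this decomposition $\omega$ takes the block form
$$\omega = \begin{bmatrix}\omega_1 & \phi \\ \phi^* & \omega_2\end{bmatrix}$$
for a triple morphism $\phi: T \to B(H, K)$, so $\omega = \mathcal{S}(\phi)$. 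Irreducibility of $\omega$ transfers, via the $1$--$1$ correspondence recalled in Section 2 between representations of $\mathcal{L}(T)$ and of $T$, to irreducibility of $\phi$.

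To show that $X$ separates $\phi$, let $\psi: T \to B(\tilde{H}, \tilde{K})$ be an irreducible representation and let $u: H \to \tilde{H}$, $v: K \to \tilde{K}$ be isometries with $v^* \psi(x) u = \phi(x)$ for every $x \in X$. Form the irreducible representation $\tilde{\omega} = \mathcal{S}(\psi): C^*(\mathcal{S}(X)) \to B(\tilde{K} \oplus \tilde{H})$ and the isometry $V = \begin{bmatrix}v & 0 \\ 0 & u\end{bmatrix}: K \oplus H \to \tilde{K} \oplus \tilde{H}$. The same block computation used in the proof of Proposition \ref{converse finite}, applied here, yields $V^* \tilde{\omega}(s) V = \omega(s)$ for every $s \in \mathcal{S}(X)$, using only $v^*v = I_K$, $u^*u = I_H$ and the intertwining hypothesis on $X$. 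Since $\mathcal{S}(X)$ separates $\omega$ and $\tilde{\omega}$ is irreducible, there exists a unitary $W: K \oplus H \to \tilde{K} \oplus \tilde{H}$ with $\tilde{\omega}(a) = W \omega(a) W^*$ for every $a \in C^*(\mathcal{S}(X))$.

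Evaluating this identity on the two diagonal projections in $\mathcal{S}(X)$ forces $W$ to intertwine them, so $W$ carries $K$ onto $\tilde{K}$ and $H$ onto $\tilde{H}$; hence $W$ is block diagonal, $W = \begin{bmatrix}w_2 & 0 \\ 0 & w_1\end{bmatrix}$, for unitaries $w_1: H \to \tilde{H}$ and $w_2: K \to \tilde{K}$. Reading off the $(1,2)$-entry of the intertwining identity on elements $\begin{bmatrix}0 & t \\ 0 & 0\end{bmatrix}$ with $t \in T$ gives $\psi(t) = w_2 \phi(t) w_1^*$, that is, $\phi(t) = w_2^* \psi(t) w_1$, which is precisely the unitary equivalence of $\phi$ and $\psi$ as defined in Section 2. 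The one genuinely substantive step, beyond routine block algebra essentially identical to what already appears in the preceding propositions, is this block-diagonal reduction of $W$, which is forced by the two Paulsen-system units.
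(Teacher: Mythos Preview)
Your proof is correct and follows essentially the same route as the paper's: decompose $L$ via the two diagonal Paulsen projections to obtain the irreducible triple morphism $\phi$, lift the data $(\psi,u,v)$ to $(\mathcal{S}(\psi),V)$ at the Paulsen-system level, invoke the hypothesis that $\mathcal{S}(X)$ separates $\omega$ to produce a unitary intertwiner, and then use the two diagonal units to force that unitary to be block diagonal, yielding the desired unitary equivalence of $\phi$ and $\psi$. The only differences from the paper are notational (the paper writes $\theta,\rho,U$ for your $\psi,\tilde{\omega},W$ and phrases the intertwining as $U^*\rho U=\omega$ rather than $\tilde{\omega}=W\omega W^*$).
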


\begin{proof}
Existence and irreduciblity of a triple morphism $\phi:T \rightarrow B(H, K)$ follows from the proof of \cite[Proposition 1.8]{FHL18}. Now we will prove that $X$ separates $\phi$.

Let $\theta: T\rightarrow B(H_{\theta},K_{\theta})$ be an irreducible representation of $T$ such that $v_{1}^*\theta (x)v_{2}=\phi(x)$ for all $x\in X$, where $v_{1}:K\rightarrow K_{\theta}$ and $v_{2}:H\rightarrow H_{\theta}$ are isometries. Let $\rho:C^*(\mathcal{S}(X))\rightarrow B(K_\theta\oplus H_\theta)$ be the irreducible representation of $C^*(\mathcal{S}(X))$ corresponding to $\theta$. Then for the isometry  $V=\begin{bmatrix}
  v_{1} & 0 \\
  0 & v_{2}
\end{bmatrix}$  we have $V^*\rho V_{|_{\mathcal{S}(X)}}=\omega _{|_{\mathcal{S}(X)}}$. Since $\mathcal{S}(X)$ separates $\omega$, there exists a unitary $U:K\oplus H \rightarrow K_{\theta}\oplus H_{\theta}$ such that $U^*\rho(a) U =\omega(a)$ for all $a\in C^*(\mathcal{S}(X))$. Using
$$ U^*\begin{bmatrix}1 & 0 \\ 0 & 0\end{bmatrix}U=\begin{bmatrix}1 & 0 \\ 0 & 0\end{bmatrix}\text{ and }  U^*\begin{bmatrix}0 & 0 \\ 0 & 1\end{bmatrix}U=\begin{bmatrix}0 & 0 \\ 0 & 1\end{bmatrix}.$$
we can  factorize $U$ as $\begin{bmatrix}
  u_{1} & 0 \\
  0 & u_{2}
\end{bmatrix}$. Thus  $u_{1}^*\theta(t) u_{2}=\phi(t)$ for every $t\in T$. Hence $X$ separates $\phi$.
\end{proof}

\begin{prop}\label{converse separates}
Let $X$ be an operator space and $T$ be the TRO containing $X$ and generated by $X$. If $\phi:T\rightarrow B(H,K)$ is an irreducible representation such that $X$ separates $\phi$, then the corresponding representation  $\omega:C^*(\mathcal{S}(X))\rightarrow B(K\oplus H)$ is an irreducible representation such that  $\mathcal{S}(X)$ separates $\omega$.
\end{prop}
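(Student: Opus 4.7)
The plan is to parallel the proofs of Theorem \ref{opsbdy} and Proposition \ref{separates}, exploiting the one-to-one correspondence between triple morphisms of $T$ and unital $*$-representations of $C^*(\mathcal{S}(X))$ via the linking algebra. Two things must be established: first, that $\omega$ is irreducible; second, that $\mathcal{S}(X)$ separates $\omega$.

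Irreducibility of $\omega$ follows exactly as in the proof of Theorem \ref{opsbdy}. Any projection $P \in B(K\oplus H)$ commuting with $\omega(C^*(\mathcal{S}(X)))$ must commute with the images of the diagonal matrix units, so $P = q_0 \oplus p_0$ for projections $q_0$ on $K$ and $p_0$ on $H$; commutativity of $P$ with $\omega\left(\begin{bmatrix} 0 & a \\ 0 & 0 \end{bmatrix}\right) = \begin{bmatrix} 0 & \phi(a) \\ 0 & 0 \end{bmatrix}$ then yields $q_0\phi(a) = \phi(a)p_0$ for every $a \in T$, and irreducibility of $\phi$ forces $P = 0$ or $P = I_{K\oplus H}$.

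For the separating property, let $\rho:C^*(\mathcal{S}(X)) \to B(L)$ be an irreducible representation and let $V:K\oplus H \to L$ be an isometry with $V^*\rho(a)V = \omega(a)$ for all $a \in \mathcal{S}(X)$. Using $\rho$ applied to the diagonal matrix units, decompose $L = K_\rho \oplus H_\rho$ and write $\rho = \begin{bmatrix} \sigma_1 & \theta \\ \theta^* & \sigma_2 \end{bmatrix}$ for an irreducible triple morphism $\theta:T \to B(H_\rho, K_\rho)$. Evaluating the intertwining on the diagonal matrix units, combined with $V$ being an isometry, forces the block-diagonal form $V = \begin{bmatrix} v_1 & 0 \\ 0 & v_2 \end{bmatrix}$ for isometries $v_1:K \to K_\rho$ and $v_2:H \to H_\rho$ (exactly as in Theorem \ref{opsbdy}). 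Then testing the intertwining on $\begin{bmatrix} 0 & x \\ 0 & 0 \end{bmatrix}$ with $x \in X$ yields $v_1^*\theta(x)v_2 = \phi(x)$ for every $x \in X$. Since $X$ separates $\phi$ and $\theta$ is irreducible, the hypothesis supplies unitaries $u_1:K \to K_\rho$ and $u_2:H \to H_\rho$ such that $u_1^*\theta(t)u_2 = \phi(t)$ for all $t \in T$.

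To conclude, assemble the unitary $U = \begin{bmatrix} u_1 & 0 \\ 0 & u_2 \end{bmatrix}:K\oplus H \to L$. A routine block computation on a generic element of $\mathcal{S}(X)$ gives $U^*\rho(a)U = \omega(a)$ on $\mathcal{S}(X)$; since $\mathcal{S}(X)$ generates $C^*(\mathcal{S}(X))$ and both $U^*\rho(\cdot)U$ and $\omega$ are $*$-homomorphisms, the identity extends to the entire $C^*$-algebra, giving the required unitary equivalence. The main subtlety is structural rather than computational: one must transfer the isometric intertwining from the Paulsen-system level down to the triple-morphism level so that the separating hypothesis on $\phi$ applies, and then lift the resulting unitary equivalence of triple morphisms back up to the level of $*$-representations. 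Once the forced block-diagonal structure of $V$ and $U$ is secured, the remaining verifications are parallel to those in Theorem \ref{opsbdy} and Proposition \ref{separates}.
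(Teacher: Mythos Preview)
Your proof is correct and follows essentially the same route as the paper: prove irreducibility of $\omega$ via the block-diagonal reduction of commuting projections, decompose $\rho$ and $V$ through the diagonal matrix units, apply the hypothesis that $X$ separates $\phi$ to obtain the unitaries $u_1,u_2$, and lift to a unitary equivalence of $\omega$ and $\rho$. The only cosmetic difference is in the final step: the paper explicitly verifies $\omega_i = u_i^*\rho_i u_i$ on products $xy^*$ and $x^*y$ using the triple-morphism identities, whereas you check $U^*\rho U = \omega$ on $\mathcal{S}(X)$ and invoke that two $*$-homomorphisms agreeing on a generating set agree on $C^*(\mathcal{S}(X))$; both arguments are valid and equally short.
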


\begin{proof}
Let $\phi:T\rightarrow B(H,K)$ be an irreducible representation. Then the corresponding representation $\omega$ of the $C^*$-algebra $C^*(\mathcal{S}(X))$ on $B(K\oplus H)$ can be written as
$$\omega=\begin{bmatrix}
\omega_1 & \phi \\
\phi^* &\omega_2
\end{bmatrix}, $$
where $\omega_{1}$ and $\omega_{2}$ as earlier.

Using the same line of argument as in the proof of Theorem \ref{opsbdy}, we have $\omega$ is an irreducible representation. We will prove that $\mathcal{S}(X)$ separates $\omega$.

Let $\rho : C^*(\mathcal{S}(X))\rightarrow B(L)$  be an irreducible representation such that  $V^*\rho V _{|_{\mathcal{S}(X)}}=\omega  _{|_{\mathcal{S}(X)}}$ for some isometry $V: K\oplus H\rightarrow L$. Since $\rho$ is an irreducible representation of $C^*(\mathcal {S}(X))$, we can decompose $L=K_{\rho}\oplus H_{\rho}$ such that
$$\rho =\begin{bmatrix}
  \rho _{1} & \theta \\
  \theta ^* & \rho _{2}
\end{bmatrix}$$
where $\theta : T\rightarrow B(H_{\rho}, K_{\rho})$ is an irreducible representation of $T$. Also, we have
$$ V =\begin{bmatrix}
  v_{1} & 0 \\
  0 & v_{2}
\end{bmatrix}
$$ where  $v_{1}: K\rightarrow K_{\rho}$ and $v_{2}: H\rightarrow H_{\rho}$ are isometries.  Substituting the expressions of $\rho$ and $V$ in the equation $V^*\rho(\cdot)V=\omega(\cdot)$, we obtain $v_{1}^*\theta (x)v_{2}=\phi(x)$ for all $x\in X$. Our assumption that $X$ separates $\phi$ implies that there exists unitaries $u_{1}$ and $u_{2}$ such that  $u_{1}^*\theta (t)u_{2}=\phi(t)$ for all $t\in T$. Then we have $u_{2}^*\theta ^*(t)u_{1}=\phi^*(t)$ for all $t\in T$. Now, for each $x,y\in T$, $\omega_1(xy^*)=\phi(x) \phi(y)^*=u_1^*\theta(x)u_2 u_2^*\theta^*(t)u_1=u_1^*\theta(x)\theta(y)^*u_1=u_1^*\rho_1(xy^*) u_1$ and similarly $\omega_2(xy^*)=u_2^*\rho_2(xy^*) u_2$. Therefore $\omega_i$ and $\rho_i$ are unitarily equivalent via the unitary $u_i$, $i=1,2$. Thus,
$$\omega =\begin{bmatrix}\omega_1 & \phi \\
\phi^*& \omega_2 \end{bmatrix}=\begin{bmatrix} u_1^*\rho_1 u_1 & u_1^*\theta u_2 \\
 u_2^*\theta^* u_1 & u_2^*\rho_2 u_2 \end{bmatrix}
 =\begin{bmatrix} u_1^*& 0 \\
 0 & u_2^*\end{bmatrix}
 \begin{bmatrix}\rho_1 & \theta \\
 \theta^* & \rho_2 \end{bmatrix}
 \begin{bmatrix} u_1& 0 \\
 0 & u_2 \end{bmatrix}.$$
Hence $\rho$ is uniatrily equivalent to $\omega$ by the unitary $ U =\begin{bmatrix}
  u_{1} & 0 \\
  0 & u_{2}
\end{bmatrix}.$
\end{proof}

Fuller, Hartz and Lupini \cite{FHL18} introduced the notion of rectangular extreme points. Suppose that $X$ is an operator  space, and $\phi: X\rightarrow B(H,K)$ is a completely contractive linear map. A \textit{rectangular operator convex combination} is an expression $\phi=\alpha _{1}^*\phi _{1}\beta _{1}+\alpha _{2}^*\phi _{2}\beta _{2}+\cdots +\alpha _{n}^*\phi _{n}\beta _{n}$, where $\beta _{i}: H \rightarrow H_{i}$ and $\alpha _{i}: K\rightarrow K_{i}$ are linear maps, and $\phi _{i}: X\rightarrow B(H_{i},K_{i})$  are completely contractive linear maps for $i=1,2,\cdots , n$ such that $\alpha _{1}^*\alpha _{1}+\cdots +\alpha _{n}^*\alpha _{n}=1$, and $\beta _{1}^*\beta _{1}+\cdots +\beta _{n}^*\beta _{n}=1$. Such a rectangular convex combination is \textit{proper} if $\alpha _{i}, \beta _{i}$ are surjective, and \textit{trivial} if $\alpha _{i}^*\alpha _{i}=\lambda _{i}I$, $\beta _{i}^*\beta _{i}= \lambda _{i}I$ , and $\alpha _{i}^*\phi_{i}\beta _{i}=\lambda _{i}\phi $ for some $\lambda _{i}\in [0,1].$ A completely contractive linear map $\phi : X\rightarrow B(H,K)$ is a \textit{ rectanagular operator extreme point} if any proper rectangular convex combination of it is trivial.

\begin{prop}\cite[Proposition 1.12]{FHL18}\label{extrmpt}
Suppose that $\phi:X\rightarrow B(H,K)$ is a completely contarctive map and $\mathcal{S}(\phi):\mathcal{S}(X)\rightarrow B(K\oplus H)$ is the associated unital completely positive map defined on the Paulsen system. The following assertions are equivalent:
\begin{enumerate}
    \item $\mathcal{S}(\phi)$ is a pure completely positive map;\\
    \item $\mathcal{S}(\phi)$ is an operator extreme point;\\
    \item $\phi$ is a rectangular operator extreme point.
\end{enumerate}

\end{prop}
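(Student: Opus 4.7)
The plan is to prove the equivalence in two stages. Stage one is the classical equivalence $(1)\Leftrightarrow(2)$: a UCP map on an operator system is pure if and only if it is an operator extreme point. This can be established via Stinespring dilation, since a completely positive map dominated by $\mathcal{S}(\phi)$ produces a positive operator in the commutant of the Stinespring image (giving an operator convex decomposition), and conversely each summand of a proper operator convex combination yields a dominated map that must be a scalar multiple of $\mathcal{S}(\phi)$.

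Stage two is $(2)\Leftrightarrow(3)$, which requires translating between operator convex combinations of $\mathcal{S}(\phi)$ and rectangular operator convex combinations of $\phi$. For the easy direction $(3)\Rightarrow(2)$, given a proper rectangular combination $\phi = \sum_{i=1}^n \alpha_i^*\phi_i\beta_i$, I would set $\gamma_i = \alpha_i \oplus \beta_i : K\oplus H \to K_i \oplus H_i$, so that a direct block-matrix computation yields
\[
\mathcal{S}(\phi) = \sum_{i=1}^n \gamma_i^*\mathcal{S}(\phi_i)\gamma_i,
\]
with $\sum \gamma_i^*\gamma_i = I_{K\oplus H}$ and each $\gamma_i$ surjective, a proper operator convex combination of $\mathcal{S}(\phi)$. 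Rectangular triviality of the original combination transfers to operator triviality of the new one by reading off the diagonal and off-diagonal blocks.

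For the harder direction $(2)\Rightarrow(3)$, starting from a proper operator convex combination $\mathcal{S}(\phi) = \sum_{i=1}^n \gamma_i^*\Psi_i\gamma_i$ with $\Psi_i:\mathcal{S}(X)\to B(L_i)$ UCP and $\gamma_i:K\oplus H\to L_i$ surjective, I would evaluate at $e_{11}=\mathrm{diag}(I,0)$ and $e_{22}=\mathrm{diag}(0,I)$. Since $\mathcal{S}(\phi)(e_{11})$ is the projection onto $K$ in $K\oplus H$, and $e_{11}+e_{22}=1_{\mathcal{S}(X)}$, an argument analogous to the one in the proof of Theorem \ref{opsbdy} should force each $\Psi_i(e_{11})$ to be a projection on $L_i$. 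This yields an orthogonal decomposition $L_i = K_i \oplus H_i$ in which $\Psi_i = \mathcal{S}(\phi_i)$ for some completely contractive $\phi_i:X\to B(H_i,K_i)$, and simultaneously $\gamma_i = \alpha_i \oplus \beta_i$ is block-diagonal, producing the required proper rectangular operator convex combination of $\phi$.

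The main obstacle will be precisely the block decomposition in $(2)\Rightarrow(3)$: the identity $\sum \gamma_i^*\Psi_i(e_{11})\gamma_i = \mathrm{diag}(I_K,0)$ does not automatically imply that each $\Psi_i(e_{11})$ is a projection. The delicate point is to use the surjectivity of each $\gamma_i$ (from properness) together with the complementary relation $\Psi_i(e_{11})+\Psi_i(e_{22})=I_{L_i}$ to extract the projection behaviour of the individual summands, from which the block-diagonal factorization of $\gamma_i$ and the Paulsen form of $\Psi_i$ both follow.
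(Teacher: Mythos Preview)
The paper does not prove this proposition at all: it is quoted verbatim from \cite[Proposition 1.12]{FHL18} and used as a black box in the characterization theorem that follows. So there is no ``paper's own proof'' to compare against; your proposal is an attempt to reprove an external result.

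That said, two remarks on your sketch. First, you have the implication labels in Stage two reversed. The argument you call ``$(3)\Rightarrow(2)$'' --- starting from a proper rectangular combination of $\phi$ and lifting to an operator convex combination of $\mathcal{S}(\phi)$ --- actually proves $(2)\Rightarrow(3)$: you assume $\mathcal{S}(\phi)$ is operator extreme, lift the given rectangular decomposition, invoke extremity to get triviality, and read triviality back down. Likewise your ``harder direction $(2)\Rightarrow(3)$'' is really $(3)\Rightarrow(2)$. The mathematics is fine; only the labels are swapped.

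Second, the obstacle you flag is genuine and is exactly where the work lies in \cite{FHL18}. From $\sum_i \gamma_i^*\Psi_i(e_{11})\gamma_i = I_K\oplus 0$ and $\sum_i \gamma_i^*\gamma_i = I$ alone one cannot conclude that each $\Psi_i(e_{11})$ is a projection; one must exploit that $0\le \Psi_i(e_{11})\le I$, the surjectivity of $\gamma_i$, and the complementary identity for $e_{22}$ simultaneously. In \cite{FHL18} this is handled by first passing through the equivalence with purity (your Stage one) rather than attacking $(2)\Leftrightarrow(3)$ directly, which sidesteps the decomposition issue. Your direct approach can be made to work but would require a more careful positivity/extremality argument than the analogy with Theorem~\ref{opsbdy} suggests, since in that theorem $V$ is a single isometry rather than a family of merely surjective maps.
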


The following theorem characterizes the boundary representations of TRO's for operator spaces.

\begin{thm} Let $X$ be an operator space generating a TRO $T$. Let $\phi : T \rightarrow B(H,K)$ be an irreducible  representation of $T$. Then $\phi$ is a boundary representation for $X$ if and only if the following conditions are satisfied:
\begin{enumerate}
    \item[(i)] $\phi_{|_X}$ is a rectangular operator extreme point. \\
     \item[(ii)] $\phi$ is a finite representation for $X$.\\
      \item[(iii)] $X$ separates $\phi$.
\end{enumerate}
\end{thm}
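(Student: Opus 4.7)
The natural strategy is to lift the statement to the Paulsen system $\mathcal{S}(X)$ and invoke the corresponding characterization for operator systems developed in \cite{PSV17} (together with the finite-representation viewpoint of \cite[Proposition 3.5]{NPSV18}), then transfer the conclusion back to $X$ via the dictionary built in Sections 3--5. Each of the three conditions in the statement has already been matched to its operator-system counterpart: rectangular operator extreme points correspond to pure UCP maps by Proposition \ref{extrmpt}, finite representations transfer via Propositions \ref{finite} and \ref{converse finite}, the separating property transfers via Propositions \ref{separates} and \ref{converse separates}, and boundary representations themselves transfer via Theorem \ref{opsbdy} together with \cite[Proposition 1.8]{FHL18}. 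With these tools, the argument essentially assembles itself.

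For the forward direction, assume $\phi$ is a boundary representation for $X$. Then by Theorem \ref{opsbdy}, $\omega:=\mathcal{S}(\phi)$ is a boundary representation of $C^*(\mathcal{S}(X))$ for $\mathcal{S}(X)$. Applying the operator-system characterization, $\omega|_{\mathcal{S}(X)}$ is a pure completely positive map, $\omega$ is a finite representation for $\mathcal{S}(X)$, and $\mathcal{S}(X)$ separates $\omega$. Proposition \ref{extrmpt} then gives (i); the transfer already carried out in Proposition \ref{finite} yields (ii); and Proposition \ref{separates} yields (iii).

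For the reverse direction, assume (i), (ii), and (iii). Proposition \ref{extrmpt} shows that $\omega=\mathcal{S}(\phi)$ is a pure UCP map on $\mathcal{S}(X)$, Proposition \ref{converse finite} shows that $\omega$ is a finite representation for $\mathcal{S}(X)$, and Proposition \ref{converse separates} shows that $\mathcal{S}(X)$ separates $\omega$. Exactly as in the proof of Theorem \ref{opsbdy}, the fact that $\phi$ is irreducible guarantees that $\omega$ is an irreducible representation of $C^*(\mathcal{S}(X))$. The operator-system characterization now implies that $\omega$ is a boundary representation for $\mathcal{S}(X)$, whence \cite[Proposition 1.8]{FHL18} forces $\phi$ to be a boundary representation for $X$.

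The main obstacle I anticipate is simply aligning the invoked operator-system characterization with our Paulsen-system setting: one needs that ``pure + finite + separating'' on an operator system is equivalent to being a boundary representation, and that this applies with $\mathcal{A}=C^*(\mathcal{S}(X))$, i.e., the unitalization of the linking algebra $\mathcal{L}(T)$. Once this input is in place, the irreducibility assumption on $\phi$ passes to $\omega$ through the standard bijection between representations of a TRO and of its linking algebra, and the rest reduces to bookkeeping across the correspondences already established.
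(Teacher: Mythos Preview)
Your proposal is correct and follows essentially the same route as the paper's own proof: lift to the Paulsen system via Theorem~\ref{opsbdy}, apply the operator-system characterization of boundary representations (the paper cites \cite[Theorem~2.4.5]{Arv69} rather than \cite{PSV17}/\cite{NPSV18}, but the content is the same), and then descend using Propositions~\ref{extrmpt}, \ref{finite}/\ref{converse finite}, \ref{separates}/\ref{converse separates} and \cite[Proposition~1.8]{FHL18}. The only cosmetic difference is that the paper does not explicitly re-argue the irreducibility of $\omega$ in the converse direction, since this is already built into the invoked operator-system result.
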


\begin{proof}
Assume that $\phi : T\rightarrow B(H,K)$ is a boundary representation for $X$. Let $\omega : C^*(\mathcal{S}(X))\rightarrow B(K\oplus H)$ be a representation of $\mathcal{S}(X)$ such that $\omega _{|_{\mathcal{S}(X)}}=\mathcal{S}(\phi)$.  By Theorem \ref{opsbdy}, $\omega$ is a boundary representation for $S(X)$. Using \cite[Theorem 2.4.5]{Arv69}, we have $\mathcal{S}(\phi)$ is a pure UCP map, $\omega $ is a finite representation for $\mathcal{S}(X)$ and $\mathcal{S}(X)$ separates $\omega$. Thus, Proposition \ref{extrmpt} implies that $\phi_{|_X}$ is a rectangular operator extreme point, Proposition \ref{finite} implies that $\phi$ is a finite representation for $X$ and Proposition \ref{separates} implies that $X$ separates $\phi$.

Conversely, assume that all the three conditions are satisfied. Using   Proposition \ref{extrmpt}, Proposition \ref{converse finite} and Proposition \ref{converse separates} we have,
$\mathcal{S}(\phi)$ is a pure UCP map, $\omega $ is a finite representation for $\mathcal{S}(X)$ and $\mathcal{S}(X)$ separates $\omega$. Thus \cite[Theorem 2.4.5]{Arv69} implies that $\omega $ is a boundary representation for $\mathcal{S}(X)$. By \cite[Proposition 1.8]{FHL18},  $\phi$ is a boundary representation for $X$.
\end{proof}

\section{Rectangular hyperrigidity}
In this section, we introduce the notion of  rectangular hyperrigidity in the context of operator spaces in TRO's. Rectangular hyperrigidity is the generalization of Arveson's \cite{Arv11} notion of hyperrigidity in the context of  operator systems in $C^*$-algebras. We define rectangular hyperrigidity  as follows:

\begin{defn}\label{RHDF}
A finite or countably infinite set $G$ of generators of a TRO $T$ is said to be \textit{rectangular hyperrigid} if for every faithful representation from $T$ to  $B(H,K)$ and  every sequence of completely contractive (CC) maps $\phi _{n}: B(H,K)\rightarrow B(H,K)$ with $\Vert \phi _{n}\Vert _{cb}=1$, $n=1,2\cdots $,
\begin{equation}\label{RHEQ}
\lim \limits_{n\rightarrow \infty}\Vert \phi _{n}(g)-g \Vert =0, \hspace{2mm} \forall~~ g\in G \hspace{2mm} \implies \hspace{2mm} \lim \limits_{n\rightarrow \infty}\Vert \phi _{n}(t)-t \Vert =0, \hspace{2mm} \forall ~~t\in T.
\end{equation}
\end{defn}

As in Arveson's \cite{Arv11} notion of hyperrigity, we have lightened the notion of rectangular hyperrigidity by identifying $T$ with image $\pi(T)$ where  $\pi:T\rightarrow B(H,K)$ is  a faithful nondegenerate representation. Significantly, rectangular hyperrigid set of operators implies not only that equation \ref{RHEQ} should hold for sequences of CC maps $\phi_n$ with $\Vert \phi _{n}\Vert _{cb}=1$, but also that the property should persist for every other faithful representation of $T$.

\begin{prop}
Let $T$ be a TRO and $G$ a generating subset of $T$. Then $G$ is rectangular hyperrigid if and only if linear span of $G$ is rectangular hyperrigid.
\end{prop}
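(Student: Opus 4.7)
The plan is to prove both implications by unwinding the definition of rectangular hyperrigidity; the whole statement reduces to the fact that the hypothesis in Definition \ref{RHDF}, namely $\|\phi_n(g)-g\|\to 0$ for every $g$ in the prescribed generating set, is linear and is preserved by taking finite linear combinations. No structural feature of the TRO is really needed beyond the fact that each $\phi_n$ is linear and uniformly bounded.

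For the forward direction, I assume $G$ is rectangular hyperrigid and I fix a faithful nondegenerate representation of $T$ on $B(H,K)$ and a sequence $\phi_n:B(H,K)\to B(H,K)$ of completely contractive maps with $\|\phi_n\|_{cb}=1$ such that $\|\phi_n(h)-h\|\to 0$ for every $h\in\operatorname{span}(G)$. Since $G\subseteq\operatorname{span}(G)$, this hypothesis applies in particular to every $g\in G$, and the rectangular hyperrigidity of $G$ immediately yields $\|\phi_n(t)-t\|\to 0$ for every $t\in T$. So $\operatorname{span}(G)$ is rectangular hyperrigid.

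For the converse direction, I assume $\operatorname{span}(G)$ is rectangular hyperrigid and start from a sequence $\phi_n$ as above with $\|\phi_n(g)-g\|\to 0$ for every $g\in G$. For any finite combination $h=\sum_{i=1}^{k}c_i g_i$ with $g_i\in G$ and $c_i\in\mathbb{C}$, linearity of $\phi_n$ gives
\begin{equation*}
\|\phi_n(h)-h\|=\Bigl\|\sum_{i=1}^{k}c_i\bigl(\phi_n(g_i)-g_i\bigr)\Bigr\|\le\sum_{i=1}^{k}|c_i|\,\|\phi_n(g_i)-g_i\|\longrightarrow 0.
\end{equation*}
Hence $\phi_n(h)\to h$ for every element $h$ of the algebraic linear span of $G$, and applying the rectangular hyperrigidity of $\operatorname{span}(G)$ finishes the proof.

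The only mildly subtle point, and the place I would be most careful, is the meaning of "linear span" in the statement: if it is interpreted as the closed linear span $\overline{\operatorname{span}}(G)$, then one also needs to pass from the algebraic span to its closure, which is painless because $\|\phi_n\|_{cb}=1$ forces $\|\phi_n(x)-\phi_n(y)\|\le\|x-y\|$ uniformly in $n$, so an $\varepsilon/3$ argument extends pointwise convergence on the dense algebraic span to pointwise convergence on its closure. I would state this once and use it to conclude that the rectangular hyperrigidity of $\operatorname{span}(G)$ and of $\overline{\operatorname{span}}(G)$ are equivalent, thereby covering either reading of the proposition.
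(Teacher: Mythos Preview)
Your argument is correct and is exactly the approach the paper has in mind: the paper's own proof consists of the single line ``The proof follows directly from the definition of rectangular hyperrigidity,'' and what you have written is precisely the unwinding of that remark via linearity of the $\phi_n$. Your extra $\varepsilon/3$ observation for the closed span is a harmless bonus beyond what the paper states.
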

\begin{proof}
The proof follows directly from the definition of rectangular hyperrigidity.
\end{proof}

\begin{prop}
Let $A$ be a $C^*$-algebra and $S$ be an operator system in $A$ such that $A=C^*(S)$. If $S$ is rectangular hyperrigid, then $S$ is hyperrigid.
\end{prop}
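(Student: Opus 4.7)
The plan is to observe that rectangular hyperrigidity, in the TRO sense, is a \emph{stronger} convergence hypothesis than Arveson's hyperrigidity, in the sense that it applies to a \emph{wider} class of approximating maps. Since every UCP map is automatically completely contractive with completely bounded norm equal to one, any UCP sequence arising in the definition of hyperrigidity qualifies as an admissible sequence in the definition of rectangular hyperrigidity, after one checks that all other pieces of data line up.

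Concretely, the steps I would carry out are: (1) Let $\pi:A\rightarrow B(H)$ be any faithful nondegenerate $*$-representation, and identify $A$ with $\pi(A)\subset B(H)$. View $A$ as a TRO inside $B(H,H)$; the $*$-homomorphism $\pi$ is then a triple morphism (since $\pi(xy^*z)=\pi(x)\pi(y)^*\pi(z)$), and it is faithful as a TRO representation because it is injective and hence completely isometric. (2) Verify that the operator system $S$, which generates $A$ as a $C^*$-algebra, also generates $A$ as a TRO: because $1\in S$, any $s\in S$ satisfies $s^*=1\cdot s^*\cdot 1$ and $s_1 s_2=s_1\cdot 1\cdot s_2$, so the TRO generated by $S$ is closed under the $C^*$-operations on $S$ and therefore equals $C^*(S)=A$. (3) Take any sequence of UCP maps $\phi_n:B(H)\rightarrow B(H)$ with $\lim_{n}\|\phi_n(s)-s\|=0$ for every $s\in S$; since $\phi_n$ is unital, $\|\phi_n\|_{cb}=\|\phi_n(1)\|=1$, and UCP maps are completely contractive, so the sequence $(\phi_n)$ fulfills the hypothesis of Definition \ref{RHDF} with generating set $G=S$. (4) Invoke rectangular hyperrigidity of $S$ to conclude $\lim_{n}\|\phi_n(a)-a\|=0$ for every $a\in A$, which is exactly Arveson's conclusion for hyperrigidity.

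There is not really a substantive obstacle here; the proof is definitional, and the only mild verification required is the one carried out in step (2), namely that a unital operator system generates its $C^*$-algebra already at the level of TROs. The content of the proposition is precisely the observation that enlarging the class of admissible approximating maps (from UCP to CC with $\|\cdot\|_{cb}=1$) makes the convergence hypothesis harder to satisfy, so rectangular hyperrigidity is the stronger property. Hence the implication $S$ rectangular hyperrigid $\Longrightarrow$ $S$ hyperrigid follows immediately once the above identifications are in place.
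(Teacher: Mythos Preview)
Your proposal is correct and follows essentially the same approach as the paper: the key observation, as in the paper's one-line proof, is that every UCP map is completely contractive with $\|\cdot\|_{cb}=1$, so the UCP sequences in Arveson's definition of hyperrigidity are admissible in the rectangular definition. Your step (2), verifying that the unital operator system $S$ generates $A$ already as a TRO, is a detail the paper leaves implicit but which you are right to spell out.
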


\begin{proof}
The proof follows from the fact that (see \cite[Proposition 3.6]{Pau02}), every UCP map is completely bounded with CB norm 1.
\end{proof}

If   $T$ is a $C^*$-algebra and  $H=K$ in definition \ref{RHDF}, then by \cite[Proposition 2.11]{Pau02} and  \cite[Proposition 3.6]{Pau02} notions of rectangular hyperrigidity and hyperrigidity coincide. Thus, rectangular hyperrigidity is a generalized notion of hyperrigidity adapted in the context of TROs.

Now, we prove a characterization of rectangular hyperrigid operator spaces which leads to study the operator space analogue of Saskin's theorem (\cite{YAS66}, \cite[Theorem  4]{BL75}) relating retangular hyperrigity and boundary representations for operator spaces.

\begin{thm}\label{REHYP}

For every separable operator space $X$ that generates a TRO $T$, the following are equivalent:

\begin{enumerate}
      \item[(i)] $X$ is rectangular hyperrigid.

      \item[(ii)]  For every nondegenerate representation  $\pi: T\rightarrow B(H_{1},K_{1})$ on  seperable Hilbert spaces and every sequence $\phi _{n} : T \rightarrow B(H_{1},K_{1})$ of CC maps with $\Vert \phi _{n} \Vert _{cb}=1 $, $n=1,2,... $
      $$\lim \limits_{n\rightarrow \infty}\Vert \phi _{n}(x)-\pi(x) \Vert =0, \hspace{2mm} \forall~ x\in X \hspace{2mm} \implies \hspace{2mm} \lim \limits_{n\rightarrow \infty}\Vert \phi _{n}(t)-\pi(t) \Vert =0, \hspace{2mm} \forall ~t\in T.$$

       \item[(iii)] For every nondegenerate representation $\pi: T\rightarrow B(H_{1}, K_{1})$  on  seperable Hilbert spaces, $\pi _{|_{X}}$ has  unique extension property.

       \item[(iv)] For every TRO $T_1$, every triple morphism of TRO's $\theta : T\rightarrow T_{1}$ with $\Vert \theta \Vert _{cb}=1$ and every completely contractive map $\phi: T_{1}\rightarrow T_{1}$ with $\Vert \phi \Vert _{cb}=1$,
        $$\phi(x)=x, \text{ }\text{ }\forall~  x\in \theta(X) \implies \phi(t)=t,  \text{ }\text{ }\forall~ t\in \theta(T). $$
\end{enumerate}
\end{thm}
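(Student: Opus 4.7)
The plan is to reduce the theorem to Arveson's operator-system hyperrigidity theorem \cite[Theorem~2.1]{Arv11} applied to the Paulsen system $\mathcal{S}(X)$ inside the $C^*$-algebra $\mathcal{A}=C^*(\mathcal{S}(X))$. The key translation device, built from Corollary~\ref{spacetopalsym}, the Proposition immediately following it, and the one-to-one correspondence between representations of $T$ and of $\mathcal{A}$ recalled in Section~2, is that condition $(iii)$ is equivalent to: $\omega|_{\mathcal{S}(X)}$ has the unique extension property for every nondegenerate representation $\omega$ of $\mathcal{A}$. By Arveson's theorem this is equivalent to hyperrigidity of $\mathcal{S}(X)$ in $\mathcal{A}$, and hence also to his $*$-homomorphism formulation of hyperrigidity. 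I will prove the cycle $(ii)\Rightarrow(iii)\Rightarrow(iv)\Rightarrow(iii)\Rightarrow(ii)$ and handle $(i)\Leftrightarrow(ii)$ separately.

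For $(ii)\Rightarrow(iii)$, if $\pi|_X$ lacked UEP, pick a rectangular operator state $\tilde\psi:T\to B(H_1,K_1)$ extending $\pi|_X$ but failing to be a triple morphism; the constant sequence $\phi_n=\tilde\psi$ violates $(ii)$. For $(iv)\Rightarrow(iii)$, given a nondegenerate $\pi$ (WLOG faithful, after quotienting out $\ker\pi$) and a rectangular operator state $\tilde\psi$ extending $\pi|_X$, Wittstock-extend the map $\pi(t)\mapsto\tilde\psi(t)$ on $\pi(T)$ to a CC cb-norm-one map $\phi:B(H_1,K_1)\to B(H_1,K_1)$ and invoke $(iv)$ with $T_1=B(H_1,K_1)$ and $\theta=\pi$: since $\phi$ fixes $\pi(X)$, it fixes $\pi(T)$, forcing $\tilde\psi=\pi$. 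For $(iii)\Rightarrow(iv)$, lift $\theta:T\to T_1$ to a unital $*$-homomorphism $\tilde\Theta:\mathcal{A}\to\mathcal{A}_1\subseteq B(L)$ (where $\mathcal{A}_1$ is built from $T_1$ as in Section~2), Paulsen-lift $\phi$ to the UCP map $\mathcal{S}(\phi)$ on $\mathcal{S}(T_1)$, and Arveson-extend to a UCP $\Phi:B(L)\to B(L)$ that fixes $\mathcal{S}(\theta(X))=\tilde\Theta(\mathcal{S}(X))$; the $*$-homomorphism formulation of hyperrigidity of $\mathcal{S}(X)$ in $\mathcal{A}$ then gives $\Phi\circ\tilde\Theta=\tilde\Theta$ on $\mathcal{A}$, and evaluating on the off-diagonal element $\begin{bmatrix}0 & t\\ 0 & 0\end{bmatrix}$ reads off $\phi(\theta(t))=\theta(t)$ for every $t\in T$.

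For $(iii)\Rightarrow(ii)$, given $\phi_n\to\pi$ in norm on $X$, Paulsen-lift to $\mathcal{S}(\phi_n):\mathcal{S}(T)\to B(K_1\oplus H_1)$ and Arveson-extend to UCP $\Phi_n:\mathcal{A}\to B(K_1\oplus H_1)$; hyperrigidity of $\mathcal{S}(X)$ in $\mathcal{A}$ forces $\Phi_n\to\omega$ on $\mathcal{A}$ (where $\omega$ is the representation of $\mathcal{A}$ corresponding to $\pi$), and the $(1,2)$-corner recovers $\phi_n\to\pi$ on $T$. The equivalence $(i)\Leftrightarrow(ii)$ is routine: restrict CC maps on $B(H,K)$ to $T$ and take $\pi$ the faithful inclusion in one direction, and dilate $\pi$ to a faithful $\pi\oplus\pi_0$ while Wittstock-extending the $\phi_n$ to $B(H_1\oplus H,K_1\oplus K)$ in the other. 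The main obstacle is the bookkeeping in the two Paulsen/Arveson-extension steps: verifying that the UCP extension $\Phi$ (respectively $\Phi_n$) continues to fix (respectively approximately fix) the corner $\mathcal{S}(\theta(X))$ (respectively $\mathcal{S}(X)$), and that Arveson's hyperrigidity conclusion on $\mathcal{A}$ descends cleanly to $T$ through the $(1,2)$-corner; the reduction to faithful $\pi$ in $(iv)\Rightarrow(iii)$ and the preservation of cb-norm one under Wittstock extensions are standard technical points.
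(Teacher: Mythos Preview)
Your approach is genuinely different from the paper's. The paper does not pass through the Paulsen system at all for Theorem~\ref{REHYP}; it simply remarks that Arveson's proof of \cite[Theorem~2.1]{Arv11} carries over verbatim once one replaces operator systems by operator spaces, UCP maps by CC maps of cb-norm one, $*$-homomorphisms by triple morphisms, and the Arveson extension theorem by the Haagerup--Paulsen--Wittstock extension theorem. Your strategy instead transfers the problem to $\mathcal{S}(X)\subseteq\mathcal{A}$ via Corollary~\ref{spacetopalsym} and its companion proposition, invokes Arveson's theorem there, and then reads the conclusions back through the $(1,2)$-corner. This is essentially the mechanism the paper later isolates as Theorem~\ref{HYIRHY}, so you are in effect merging the two results; your route buys you the ability to quote Arveson rather than rework his argument, at the cost of the Paulsen-system bookkeeping.

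There is one genuine gap. In your direct argument for $(iv)\Rightarrow(iii)$ you write ``WLOG faithful, after quotienting out $\ker\pi$'' in order to make the assignment $\pi(t)\mapsto\tilde\psi(t)$ well defined. This reduction does not go through: a completely contractive extension $\tilde\psi$ of $\pi|_X$ has no reason to vanish on the TRO ideal $\ker\pi$, so $\tilde\psi$ need not factor through $T/\ker\pi$, and showing the unique extension property for the induced faithful representation of $T/\ker\pi$ does not yield it for $\pi$. The cleanest repair, consistent with the rest of your plan, is to abandon the direct argument here and route $(iv)\Rightarrow(iii)$ through the Paulsen system as well: lift $\pi$ to the corresponding representation $\omega$ of $\mathcal{A}$, lift $\tilde\psi$ to the UCP map $\mathcal{S}(\tilde\psi)$ on $\mathcal{S}(T)$ and Arveson-extend to $\mathcal{A}$; then your translation of $(iii)$ together with Arveson's equivalence gives $\omega|_{\mathcal{S}(X)}$ the unique extension property, forcing $\mathcal{S}(\tilde\psi)=\omega$ on $\mathcal{A}$ and hence $\tilde\psi=\pi$. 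Alternatively, replace $(iv)\Rightarrow(iii)$ by $(iv)\Rightarrow(i)$, where the representation in $(i)$ is already faithful and the well-definedness obstruction disappears. The remaining steps in your outline are sound.
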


The spirit and the line of argument in the proof of the above theorem are the same as those by Arveson \cite[Theorem 2.1]{Arv11}, where we can replace operator systems, UCP maps and representations of $C^*$-algebras by operator spaces, CC maps and triple morphism of TROs. Further, we need to use Haagerup-Paulsen-Wittstock \cite[Theorem 8.2]{Pau02} extension theorem in place of Arveson extension theorem \cite[Theorem 1.2.3]{Arv69}.

\begin{ex}
Let $H$ be an infinite dimensional Hilbert space and $V$ be the unilateral right shift operator on $H$. Then the operator space $S=\text{span}\{I,V,V^*\}$ is not rectangular hyperrigid. To see this,  take $\phi_{n} : B(H)\rightarrow B(H)$  as $\phi_{n}=V^*I_{S}(\cdot)V$ for each $n=1,2\cdots $, where $I_{S} $ is the identity representation of $C^*(S)$. Then $\phi_{n}$ is a completely contractive linear map with $\Vert \phi_{n} \Vert _{cb} =1$ and $\phi_{n}$ is identity on $S$. Hence $\lim \limits_{n\rightarrow \infty } \Vert \phi_{n}(s)-s \Vert =0$ $\forall s\in S$ but $\lim \limits_{n\rightarrow \infty } \Vert \phi_{n}(VV^*)-VV^* \Vert = \Vert I-VV^* \Vert =1$. Note that the arguments in this example carries over to any isometry V which is not a unitary.
\end{ex}

We deduce the following necessary conditions for rectangular hyperrigidity:

\begin{cor}
Let $X$ be a separable operator space generating a TRO $T$. If $X$ is rectangular hyperrigid then every irreducible representation of $T$ is a boundary representation for $X$.
\end{cor}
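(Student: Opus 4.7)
The plan is to derive this corollary as an immediate consequence of the implication (i) $\Rightarrow$ (iii) in Theorem \ref{REHYP}. Fix an arbitrary irreducible representation $\pi : T \to B(H,K)$. Since irreducibility implies nondegeneracy, and since any nonzero triple morphism is completely contractive with $\Vert \pi \Vert_{cb} = 1$ (noting that $\pi|_X$ cannot vanish, as $X$ generates $T$ and $\pi$ is irreducible), the restriction $\pi|_X$ is a rectangular operator state. Once I show that $\pi|_X$ has the unique extension property, its unique rectangular operator state extension to $T$ will be forced to coincide with $\pi$, which is irreducible by hypothesis; by the definition of boundary representation recalled at the end of Section 2, this exhibits $\pi|_X$ as a boundary representation for $X$.

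The only step that requires genuine care is the verification that Theorem \ref{REHYP} (iii) actually applies, i.e., that $H$ and $K$ are separable. This is where the separability of $X$ enters. Since $X$ generates $T$ as a TRO and $X$ is separable, $T$ itself is separable, and consequently its linking algebra $\mathcal{L}(T)$ is a separable $C^*$-algebra. Via the 1-1 correspondence between representations of $T$ and $*$-representations of $\mathcal{L}(T)$ recalled in Section 2, $\pi$ corresponds to an irreducible $*$-representation of $\mathcal{L}(T)$ on $K \oplus H$. Irreducible representations of separable $C^*$-algebras always act on separable Hilbert spaces, so $K \oplus H$, and hence both $H$ and $K$, are separable.

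With separability confirmed, Theorem \ref{REHYP} (iii) applies to $\pi$ and guarantees that $\pi|_X$ has the unique extension property. Combined with the opening observation, this is exactly what is needed to conclude that $\pi|_X$ is a boundary representation for $X$. I expect no genuine obstacle beyond the separability check; the remainder of the argument is a formal unwinding of the definitions of rectangular operator state, unique extension property, and boundary representation.
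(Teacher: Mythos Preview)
Your proposal is correct and follows essentially the same route as the paper, which simply records the corollary as an immediate consequence of Theorem \ref{REHYP}; you invoke condition (iii) whereas the paper cites condition (ii), but since (i)--(iv) are equivalent this is immaterial. Your additional care in verifying the separability of $H$ and $K$ (via separability of $T$ and hence of $\mathcal{L}(T)$) fills in a detail the paper leaves implicit.
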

\begin{proof}
The assertion is an immediate consequence of condition (ii) of Theorem \ref{REHYP}.
\end{proof}

\begin{problem}\label{RHP}
If every irreducible representation of TRO a $T$ is a boundary representation for a separable operator space $X\subseteq T$, then is $X$  rectangular hyperrigid ?
\end{problem}

\begin{prop}\label{dirsum}
Let $X$ be an operator space generating TRO $T$. Let $\pi_i:T\rightarrow B(H_i,K_i)$ be a non-degenerate representation such that $\pi_{i{|_X}}$ has  unique extension property for $i=1,2,...,n$. Then the direct sum of rectangular operator states
$$\oplus_{i=1}^n \pi_ {i{|_X}}:X\rightarrow B(\oplus_{i=1}^nH_i,\oplus_{i=1}^nK_i) $$
has  unique extension property.
\end{prop}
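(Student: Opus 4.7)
The plan is to transfer the problem to the Paulsen system level, where direct-sum stability of the unique extension property is classical, and then transfer back to $X$.

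First, by Corollary \ref{spacetopalsym}, the hypothesis that each $\pi_i|_X$ has the unique extension property implies that each UCP map $\mathcal{S}(\pi_i|_X) : \mathcal{S}(X) \rightarrow B(K_i \oplus H_i)$ has the unique extension property for $\mathcal{S}(X)$ inside $C^*(\mathcal{S}(X))$. A standard multiplicative-domain argument then shows that the direct sum $\bigoplus_{i=1}^n \mathcal{S}(\pi_i|_X)$ also has the unique extension property on $\mathcal{S}(X)$: any UCP extension $\tilde{\omega}$ to $C^*(\mathcal{S}(X))$ has diagonal compressions $\tilde{\omega}_{ii}$ which are UCP extensions of $\mathcal{S}(\pi_i|_X)$ and hence coincide with the $*$-representations $\mathcal{S}(\pi_i)$ by UEP; the Schwarz inequality then forces every element of $\mathcal{S}(X)$ into the multiplicative domain of $\tilde{\omega}$, so $\tilde{\omega}$ is itself a $*$-representation on $C^*(\mathcal{S}(X))$ and is therefore determined by its values on the generating set $\mathcal{S}(X)$.

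Second, the canonical shuffle unitary identifying $\bigoplus_i (K_i \oplus H_i)$ with $\bigl(\bigoplus_i K_i\bigr) \oplus \bigl(\bigoplus_i H_i\bigr)$ converts $\bigoplus_i \mathcal{S}(\pi_i|_X)$ into $\mathcal{S}\bigl(\bigoplus_i \pi_i|_X\bigr)$, because the scalar identity blocks $\lambda I_{K_i}$ and $\mu I_{H_i}$ reassemble into $\lambda I_{\oplus K_i}$ and $\mu I_{\oplus H_i}$, while the $X$-entries become the direct-sum operators $\bigoplus_i \pi_i(x)$ and $\bigoplus_i \pi_i(y)^*$. Since unitary equivalence preserves the unique extension property, $\mathcal{S}\bigl(\bigoplus_i \pi_i|_X\bigr)$ has UEP on $\mathcal{S}(X)$.

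Finally, apply the proposition immediately preceding Section 4: since $\mathcal{S}\bigl(\bigoplus_i \pi_i|_X\bigr)$ has the unique extension property on $\mathcal{S}(X)$ and is already of the form $\mathcal{S}(\phi)$ with $\phi = \bigoplus_i \pi_i|_X$ relative to the natural block decomposition, the rectangular operator state $\bigoplus_i \pi_i|_X$ itself has the unique extension property on $X$. The main obstacle is the multiplicative-domain step: one must verify that on the Paulsen side the off-diagonal blocks of any UCP extension vanish once the diagonals are pinned down as the $*$-representations $\mathcal{S}(\pi_i)$, and for this one uses that for each $s \in \mathcal{S}(X)$, the quantities $\tilde{\omega}(s^*s)$ and $\tilde{\omega}(s)^*\tilde{\omega}(s)$ agree because both reduce to $\bigoplus_i \mathcal{S}(\pi_i)(s^*s)$.
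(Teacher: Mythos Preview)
Your proof is correct and follows the same strategy as the paper: lift to the Paulsen system via Corollary~\ref{spacetopalsym}, use direct-sum stability of the unique extension property there (the paper simply cites \cite[Proposition~4.4]{Arv11} rather than sketching its multiplicative-domain proof as you do), note that $\bigoplus_i \mathcal{S}(\pi_i|_X)=\mathcal{S}\bigl(\bigoplus_i \pi_i|_X\bigr)$ up to the canonical shuffle, and descend via \cite[Proposition~1.8]{FHL18}. One minor wording issue in your last sentence: $\tilde{\omega}(s^*s)$ is not a priori equal to $\bigoplus_i \mathcal{S}(\pi_i)(s^*s)$---only its diagonal is---but since $\tilde{\omega}(s)^*\tilde{\omega}(s)$ genuinely is block-diagonal and the Schwarz difference is positive with zero diagonal, it vanishes, which is presumably the argument you intend.
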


\begin{proof}
Assume that  $\pi_{i{|_X}}:T\rightarrow B(H_{i},K_{i})$ has unique extension property for $X$, $i=1,2,\cdots n$. By Corollary \ref{spacetopalsym},  $\mathcal{S}(\pi_{i{|_X}}): \mathcal{S}(X)\rightarrow B(K_{i}\oplus H_{i})$ has unique extension property for $\mathcal{S}(X)$. Using \cite[Proposition 4.4]{Arv11}, $\oplus _{i=1}^{n} \mathcal{S}(\pi_{i{|_X}})$ has unique extension property for $\mathcal{S}(X)$.
Note that $\oplus_{i=1}^{n} \mathcal{S}(\pi_{i{|_X}})=\mathcal{S}(\oplus _{i=1}^{n} \pi_{i{|_X}})$. Therefore,  by \cite[Proposition 1.8]{FHL18},   $\oplus_{i=1}^{n} \pi_{i{|_X}}$ has unique extension property for $X$.

\end{proof}

Here, we settle the Problem \ref{RHP} when TRO is finite dimensional. Thus,  we have a finite dimensional version of the classical Saskin's theorem in the context of operator spaces and TROs.

\begin{thm}
Let $X$ be an operator space whose generated TRO $T$ is finite dimensional, such that every irreducible representation of $T$ is a boundary representation for $X$. Then $X$ is rectangular hyperrigid.
\end{thm}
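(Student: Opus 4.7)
The proof plan rests on the characterization provided by Theorem \ref{REHYP}: it suffices to verify condition (iii), namely that $\pi_{|_X}$ has the unique extension property for every nondegenerate representation $\pi : T \to B(H_1, K_1)$ on separable Hilbert spaces. Since $T$ is finite-dimensional, $X\subseteq T$ is automatically separable, so Theorem \ref{REHYP} applies.

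First I would exploit the finite dimensionality of $T$. The linking algebra $\mathcal{L}(T)$ is then a finite-dimensional $C^{*}$-algebra, hence isomorphic to a finite direct sum of matrix algebras, and so admits only finitely many irreducible $*$-representations up to unitary equivalence. Via the bijective correspondence between representations of $T$ and of $\mathcal{L}(T)$ recalled in Section~2, the same holds for $T$: fix a complete list $\pi_{1},\ldots,\pi_{k}$ of representatives of irreducible triple morphisms of $T$. Any nondegenerate representation $\pi$ on separable Hilbert spaces is then unitarily equivalent to a countable direct sum of the form $\bigoplus_{j=1}^{k} m_{j}\,\pi_{j}$, where each multiplicity $m_{j}$ is at most countable.

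The hypothesis guarantees that each $\pi_{j}$ is a boundary representation for $X$, so each $\pi_{j|_{X}}$ has the unique extension property. The main remaining task — and the step I expect to be the principal obstacle — is to promote the UEP of these summands to UEP of the full $\pi_{|_{X}}$, since Proposition \ref{dirsum} is formulated only for finite direct sums. I would handle this by moving to the Paulsen side via Corollary \ref{spacetopalsym}: each $\mathcal{S}(\pi_{j|_{X}})$ has UEP on $\mathcal{S}(X)$, and Arveson's direct-sum stability for UCP maps with UEP (as in \cite[Proposition 4.4]{Arv11}, whose Stinespring-based argument is not sensitive to the cardinality of the index set) yields UEP for $\mathcal{S}(\pi_{|_{X}}) \cong \bigoplus_{j} m_{j}\,\mathcal{S}(\pi_{j|_{X}})$ on $\mathcal{S}(X)$. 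Applying \cite[Proposition 1.8]{FHL18} then transfers this back to UEP of $\pi_{|_{X}}$ on $X$.

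Having established condition (iii) of Theorem \ref{REHYP} for every nondegenerate $\pi$ on separable Hilbert spaces, the equivalence (i) $\Leftrightarrow$ (iii) forces $X$ to be rectangular hyperrigid. As an alternative route, one could instead observe that $C^{*}(\mathcal{S}(X))$ is finite-dimensional (since it is a unitalization of $\mathcal{L}(T)$), invoke Theorem \ref{opsbdy} to conclude that every irreducible representation of $C^{*}(\mathcal{S}(X))$ is a boundary representation for $\mathcal{S}(X)$, apply Arveson's hyperrigidity theorem for $C^{*}$-algebras with countable spectrum \cite{Arv11} to deduce hyperrigidity of $\mathcal{S}(X)$, and then translate back to rectangular hyperrigidity of $X$; but this detour requires a separate translation step, whereas the direct UEP argument through Theorem \ref{REHYP}(iii) is cleaner.
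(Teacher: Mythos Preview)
Your proof follows the same route as the paper's: verify Theorem~\ref{REHYP}(iii) by decomposing an arbitrary nondegenerate representation into irreducibles and then invoking the stability of the unique extension property under direct sums. The only substantive difference concerns the cardinality of that decomposition. The paper simply cites \cite[Theorem 3.1.7]{DB11} to assert that every nondegenerate representation of a finite-dimensional TRO is already a \emph{finite} direct sum of irreducibles, after which Proposition~\ref{dirsum} applies directly. You instead allow countable multiplicities and push the argument through the Paulsen system (Corollary~\ref{spacetopalsym}, Arveson's \cite[Proposition 4.4]{Arv11} in its arbitrary-index form, then \cite[Proposition 1.8]{FHL18}) to cover the infinite case. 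Your version is thus more self-contained on the cardinality issue, while the paper's is shorter but leans on the structural theorem from \cite{DB11}. The alternative detour you sketch via Theorem~\ref{opsbdy} and Arveson's countable-spectrum result also works and is essentially what the paper records in its closing Remark after Theorem~\ref{HYIRHY}.
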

\begin{proof}
Using item (iii) of Theorem \ref{REHYP}, it is enough to prove that for every nondegenerate representation $\pi:T\rightarrow B(H,K)$, the rectangular operator state $\pi_{|_X}$ has  unique extension property. Since $T$ finite dimensional, \cite[Theorem 3.1.7]{DB11} implies that every nondegenerate representation of a finite dimensional TRO is a finite direct sum of irreducible repersentations. By our assumption every irreducible representation restricted to $X$ has unique extension property. By Proposition \ref{dirsum} finite direct sum of irreducible representation restricted to $X$ has unique extension property. Therefore every nondegenerate representation restricted to $X$ has  unique extension property.
\end{proof}

Now, we explore  relations between rectangular hyperrigity of an operator space and hyperrigidity of the corresponding Paulsen system.

\begin{thm}\label{HYIRHY}
Let $X$ be a separable operator space generating a TRO $T$. Paulsen system $\mathcal S(X)$ is  hyperrigid in the $C^*$-algebra $C^*(\mathcal S(X))$ if and only if $X$ is   rectangular hyperrigid in TRO $T$.
\end{thm}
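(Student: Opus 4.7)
The plan is to combine the characterization of rectangular hyperrigidity given by condition (iii) of Theorem \ref{REHYP} with Arveson's analogous characterization of hyperrigidity for operator systems \cite[Theorem 2.1]{Arv11}, which says that $\mathcal{S}(X)$ is hyperrigid in $C^*(\mathcal{S}(X))$ if and only if for every nondegenerate representation $\omega$ of $C^*(\mathcal{S}(X))$ on a separable Hilbert space the restriction $\omega|_{\mathcal{S}(X)}$ has the unique extension property. The remaining ingredient is to transport the unique extension property back and forth between $X$ and $\mathcal{S}(X)$, which is precisely what Corollary \ref{spacetopalsym} together with the proposition immediately following it supplies.

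For the forward direction I would start by assuming $\mathcal{S}(X)$ is hyperrigid. Fix an arbitrary nondegenerate representation $\pi : T \to B(H,K)$ on separable Hilbert spaces and lift it, via the 1-1 correspondence between representations of $T$ and those of $C^*(\mathcal{S}(X))$ recorded in Section~2, to the associated nondegenerate representation $\omega$ of $C^*(\mathcal{S}(X))$ on $K \oplus H$. Since $\omega|_{\mathcal{S}(X)} = \mathcal{S}(\pi|_X)$, Arveson's characterization gives that $\mathcal{S}(\pi|_X)$ has the unique extension property, and the proposition following Corollary \ref{spacetopalsym} pushes this down to $\pi|_X$. Theorem \ref{REHYP}(iii) then yields rectangular hyperrigidity of $X$.

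For the converse direction I would reverse the roles. Given an arbitrary nondegenerate representation $\omega : C^*(\mathcal{S}(X)) \to B(L)$ on a separable Hilbert space, use the orthogonal projections arising from the two diagonal units of $\mathcal{S}(X)$ to decompose $L = K \oplus H$, and extract the associated nondegenerate triple morphism $\pi : T \to B(H,K)$ with $\omega|_{\mathcal{S}(X)} = \mathcal{S}(\pi|_X)$. Rectangular hyperrigidity of $X$ and Theorem \ref{REHYP}(iii) give that $\pi|_X$ has the unique extension property, and Corollary \ref{spacetopalsym} lifts this to $\omega|_{\mathcal{S}(X)}$. Arveson's characterization concludes that $\mathcal{S}(X)$ is hyperrigid.

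The only delicate point worth checking is that the correspondence between nondegenerate representations of $T$ and of $C^*(\mathcal{S}(X))$ preserves separability of the underlying Hilbert spaces in both directions; this is automatic since $L = K \oplus H$ is separable if and only if both $K$ and $H$ are. Once this bookkeeping is addressed, the argument reduces to chaining together the two parallel characterizations of hyperrigidity with the unique-extension-property transfer machinery already established earlier in the paper.
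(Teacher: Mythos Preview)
Your proposal is correct, and the converse direction ($X$ rectangular hyperrigid $\Rightarrow$ $\mathcal{S}(X)$ hyperrigid) matches the paper's argument essentially verbatim: both invoke Theorem~\ref{REHYP}(iii), the correspondence between nondegenerate representations of $T$ and of $C^*(\mathcal{S}(X))$, Corollary~\ref{spacetopalsym}, and then \cite[Theorem~2.1]{Arv11}.

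The forward direction, however, differs. The paper argues directly from the sequential definitions: given CC maps $\phi_n:B(H,K)\to B(H,K)$ converging to the identity on $X$, it forms the UCP maps $\mathcal{S}(\phi_n)$ on $B(K\oplus H)$, checks convergence on $\mathcal{S}(X)$ by a norm estimate, applies hyperrigidity of $\mathcal{S}(X)$ to get convergence on all of $C^*(\mathcal{S}(X))$, and then reads off convergence on the corner $T$. You instead run the unique-extension-property characterization symmetrically in this direction as well, lifting a nondegenerate $\pi$ on $T$ to $\omega$ on $C^*(\mathcal{S}(X))$, invoking \cite[Theorem~2.1]{Arv11} to get UEP for $\omega|_{\mathcal{S}(X)}=\mathcal{S}(\pi|_X)$, and then descending via the proposition after Corollary~\ref{spacetopalsym}. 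Your route is cleaner and more symmetric, and it avoids the implicit Arveson extension that the paper needs to make sense of $\mathcal{S}(\phi_n)$ as a UCP map on all of $B(K\oplus H)$. The paper's route, on the other hand, is more self-contained in that it does not appeal to the proposition following Corollary~\ref{spacetopalsym} (whose proof is deferred to \cite{FHL18}).
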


\begin{proof}
Assume that  Paulsen system $\mathcal S(X)$ is  hyperrigid in the $C^*$-algebra $C^*(\mathcal S(X))$.\\
Let $\phi _{n}: B(H,K)\rightarrow B(H,K)$  be  CC maps with $\Vert \phi _{n}\Vert _{cb}=1$, $n=1,2\cdots $, such that

$$\lim \limits_{n\rightarrow \infty}\Vert \phi _{n}(x)-x \Vert =0 \hspace{2mm}, \forall~ x\in X.$$
Then the corresponding maps $\mathcal S(\phi _{n}):B(K\oplus H)\rightarrow B(K\oplus H)$, $n=1,2\cdots$ are UCP maps.
For all $ x,y\in X$ and  $ \lambda , \mu \in \mathbb{C}$, we have
\begin{eqnarray*}
 \left\Vert \mathcal S( \phi _{n})(\begin{bmatrix} \lambda & x \\ y^* & \mu\end{bmatrix})- \begin{bmatrix} \lambda & x \\ y^* & \mu \end{bmatrix} \right\Vert & =& \left\Vert \begin{bmatrix} \lambda & \phi _{n}(x) \\ \phi _{n}(y)^* & \mu  \end{bmatrix}- \begin{bmatrix} \lambda & x \\ y^* & \mu  \end{bmatrix} \right\Vert  \\
&=& \left\Vert \begin{bmatrix} 0 & \phi _{n}(x)-x \\ \phi _{n}(y)^*-y^* & 0 \end{bmatrix} \right\Vert \\
& \leq & \Vert \phi _{n}(x)-x \Vert + \Vert \phi_{n}(y)^* -y^* \Vert .
\end{eqnarray*}
Since $\mathcal S(X)$ is hyperrigid in $C^*(\mathcal S(X))$,  we conclude that for every $t\in T$
$$\lim\limits_{n\rightarrow \infty}\left\Vert  \mathcal S(\phi_{n})(\begin{bmatrix} 0 & t \\ 0 & 0\end{bmatrix})- \begin{bmatrix} 0 & t \\ 0 & 0\end{bmatrix} \right\Vert =0.$$
Thus,
$$\lim\limits_{n\rightarrow \infty}\Vert \phi_{n} (t)  - t \Vert =0 \text{  } \forall~ t\in T.$$

Conversely, suppose $X$ is rectangular hyperrigid.
By item (iii) of Theorem \ref{REHYP},  every nondegenerate representation of $T$ restricted to $X$ has  unique extension propery. From \cite[Proposition 3.1.2 and Equation 3.1]{BM04}, we have a one to one correspondence between the representations of a TRO $T$ and its linking algebra. Using Corollary \ref{spacetopalsym}, we get that every nondegenerate representation of the $C^*$-algebra $C^*(\mathcal S(X))$ restricted to $\mathcal S(X)$ has  unique extension property. Thus, by \cite[Theorem 2.1]{Arv11}, $\mathcal S(X)$ is hyperrigid.
\end{proof}

The following remark gives some partial answers to the Problem \ref{RHP} with extra assumptions.

\begin{rem}
Let $X$ be an operator space in $B(H,K)$ and  $T$ be a TRO  containing $X$ and generated by $X$. Assume that every irreducible representation $\psi:T\rightarrow B(H,K)$  is  a boundary representation for $X$. Then by Theorem \ref{opsbdy}, each $\mathcal S(\psi)$ is a boundary representation of  $C^*(\mathcal S (X))$ for $\mathcal S(X)$. If either $C^*(\mathcal S (X))$ has countable spectrum \cite[Theorem 5.1]{Arv11} or $C^*(\mathcal S (X))$ is a Type I $C^*$-algebra with $C^*(\mathcal S (X))''$ as the codomain for UCP maps on $C^*(\mathcal S (X))$ \cite[Corollary 3.3]{CK14}, then $\mathcal{S}(X)$ is hyperrigid. Therefore by Theorem \ref{HYIRHY}, $X$ is rectangular hyperrigid in $T$.
\end{rem}


\subsection*{Acknowledgment}
The authors would like to thank Michael Hartz for some valuable comments and suggestions regarding this manuscript. The research  work of the first  author is supported by NBHM (National Board of Higher Mathematics, India) Ph.D Scholarship File No. 0203/17/2019/R\&D-II/10974. The research work of the second author is supported by NBHM (National Board of Higher Mathematics, India) Post Doctoral fellowship File No. 0204/26/2019/R\&D-II/12037. The  authors   like to thank Kerala School of Mathematics (KSoM), Kozhikode, Kerala, India for the   discussion meeting on 'Non-Commutative Convexity' held there during which work on this article was started.

\end{document}